\documentclass[12pt,reqno]{amsart}
\usepackage{amsmath,amssymb,extarrows}
\usepackage{hyperref}
\usepackage{url}
\usepackage{tikz,enumerate}
\usepackage{diagbox}
\usepackage{appendix}
\usepackage{epic}
\usepackage{comment}
\usepackage{bm}
\usepackage{adjustbox}
\usepackage{booktabs}
\usepackage{array}

\usepackage{float} 

\usepackage{cite}
\usepackage{hyperref}
\usepackage{array}

\allowdisplaybreaks[4]

\newtheorem{theorem}{Theorem}
\newtheorem{corollary}[theorem]{Corollary}

\newtheorem{lemma}[theorem]{Lemma}

\theoremstyle{definition}

\theoremstyle{remark}
\newtheorem{rem}{Remark}
\numberwithin{equation}{section}
\numberwithin{theorem}{section}
\numberwithin{defn}{section}

\newcommand{\e}{\mathrm e}
\newcommand{\ii}{\mathrm{i}}

\begin{document}
\title[Proofs of Some Kanade--Russell Mod 12 Conjectures]
 {Proofs of Some Kanade--Russell Mod 12 Conjectures}

\author{Liuquan Wang}
\address{School of Mathematics and Statistics, Wuhan University, Wuhan 430072, Hubei, People's Republic of China}
\email{wanglq@whu.edu.cn;mathlqwang@163.com}

\subjclass[2020]{11P84, 33D15, 33D60}

\keywords{Rogers--Ramanujan type identities; cubic summation formulas; Kanade--Russell identity; partitions; integral method}

\date{September 8, 2026}
\dedicatory{Dedicated to Professor Heng Huat Chan on the occasion of his forthcoming 60th birthday}

\begin{abstract}
Kanade and Russell conjectured seventeen Rogers--Ramanujan type identities of modulus 12.  Eleven of these identities involving triple sums were proved by Bringmann--Jennings-Shaffer--Mahlburg and by Rosengren. Motivated by these works and using similar methods, we settle all the six remaining conjectures including two triple sum identities labeled $I_{5a}$ and $I_{6a}$ originated from Russell's thesis and four quadruple sum identities labeled 7, 7a, 8 and 8a. Our proof of the triple sum identities combines linear recurrences, $q$-difference equations, and $q$-series summation formulas. For the quadruple sum identities, we represent the sums as contour integrals whose integrands are infinite products and evaluate them by residue calculus. The resulting residues reduce to single sum cubic basic hypergeometric series, and we are able to express them as infinite products.
\end{abstract}

\maketitle

\section{Introduction}\label{sec-intro}
Rogers--Ramanujan type identities are sum-to-product identities that express $q$-hypergeometric series as elegant infinite products. They constitute a central theme in the theory $q$-series and have prominent applications in combinatorics \cite{MacMahon}, number theory \cite{GOW}, the representation theory of Lie algebras \cite{Lepowsky-Wilson1982}, and mathematical physics \cite{Baxter}. Their study dates back to Rogers' discovery \cite{Rogers1894}, around 1894, of the following celebrated identities:
\begin{align}\label{RR}
\sum_{n=0}^\infty \frac{q^{n^2}}{(q;q)_n}=\frac{1}{(q,q^4;q^5)_\infty}, \quad \sum_{n=0}^\infty \frac{q^{n(n+1)}}{(q;q)_n}=\frac{1}{(q^2,q^3;q^5)_\infty}.
\end{align}
Here and throughout this paper, we assume that $|q|<1$ for convergence and use the standard $q$-series notation: 
\begin{align}
&(a;q)_\infty:=\prod\limits_{k=0}^{\infty}(1-aq^k), \quad (a;q)_n:=\frac{(a;q)_\infty}{(aq^n;q)_\infty},\quad  n\in \mathbb{Z}, \\
&(a_1,\dots,a_m;q)_n:=(a_1;q)_n\cdots (a_m;q)_n, \quad n\in \mathbb{Z}\cup \{\infty\}.
\end{align}
The identities in \eqref{RR} were later rediscovered by Ramanujan and are therefore commonly known as the Rogers--Ramanujan identities.

Since the pioneering work of Rogers \cite{Rogers1894}, numerous Rogers--Ramanujan type identities have been discovered. One particularly notable contribution is Slater's famous list \cite{Slater}, which comprises 130 such identities derived using the machinery of Bailey pairs. Below we shall focus on the background related to this paper, and we refer the reader to Sills' book \cite{Sills-book} and the references therein for more detailed introduction on this topic.

Many Rogers--Ramanujan type identities admit elegant combinatorial interpretations as partition identities. Let $1\leq \lambda_1\leq\lambda_2\leq\cdots\leq\lambda_s$ be integers. We call
$\pi:\lambda_1+\lambda_2+\cdots+\lambda_s$
a partition of $n$ if $\sum_{j=1}^s\lambda_j=n$. In many examples, the sum side of a Rogers--Ramanujan type identity is the generating function for partitions satisfying certain difference conditions between consecutive parts, whereas the product side enumerates partitions whose parts belong to specified subsets of the positive integers, often specified by congruence classes. For example, for $i=1,2$, the $i$-th identity in \eqref{RR} asserts that the number of partitions of $n$ whose smallest part is at least $i$ and whose consecutive parts differ by at least $2$ equals the number of partitions of $n$ into parts congruent to $\pm i\pmod{5}$. By combining partition-theoretic ideas with $q$-series techniques, the Rogers--Ramanujan identities \eqref{RR} have been generalized to the following multisum identities:
 for $\delta=0,1$, $k\geq 2$ and $1\leq i \leq k$,
\begin{equation}\label{AGB}
\sum_{n_1,\dots,n_{k-1}\geq 0}
\frac{q^{N_1^2+\cdots+N_{k-1}^2+N_i+\cdots+N_{k-1}}}{(q;q)_{n_1}\cdots(q;q)_{n_{k-2}}
(q^{\,2-\delta};q^{\,2-\delta})_{n_{k-1}}}=
\frac{(q^i,q^{2k+\delta-i},q^{2k+\delta};q^{2k+\delta})_\infty}{(q;q)_\infty}
\end{equation}
where $N_j=n_j+n_{j+1}+\cdots+n_{k-1}$. When $\delta=1$, these are the Andrews--Gordon identities of odd modulus $2k+1$ \cite{Andrews1974,Gordon1961}, which generalize the Rogers--Ramanujan identities \eqref{RR}. They were first established by Gordon \cite{Gordon1961} as partition identities, and the analytic form here was subsequently proved by Andrews \cite{Andrews1974}. When $\delta=0$, they reduce to Bressoud's identities of even modulus $2k$ \cite{Bressoud1980}.

In a series of seminal works, Lepowsky and Wilson \cite{Lepowsky-Wilson-1978,Lepowsky-Wilson1982,Lepowsky-Wilson-1984} developed a representation-theoretic approach to the Rogers--Ramanujan identities and their generalizations \eqref{AGB}, based on vertex operators and $Z$-algebras associated with the affine Lie algebra $A_1^{(1)}$. They interpreted the sum sides in terms of bases for vacuum spaces constructed from principally twisted $Z$-operators, while the product sides emerged from principally specialized characters of standard $A_1^{(1)}$-modules, thereby providing a conceptual Lie-theoretic interpretation of \eqref{AGB}. Their work furnished a purely vertex-operator-theoretic proof of the Rogers--Ramanujan identities and established a framework for treating the corresponding higher-level identities; a complete $Z$-algebra proof of the latter was subsequently developed by Meurman and Primc \cite{MP}.

These developments inspired the search for analogous identities associated with other affine Lie algebras. For example, Capparelli \cite{Capparelli-thesis} discovered two conjectural partition identities in the course of studying level-three standard modules for $A_2^{(2)}$ by means of $Z$-algebras. These identities were first proved by Andrews \cite{Andrews1994} and later by Capparelli \cite{Capparelli1996}. As discussed in \cite{KR-2019}, Rogers--Ramanujan type identities associated with the affine Lie algebras $A_3^{(2)}$, $A_5^{(2)}$, $A_7^{(2)}$, and $A_{11}^{(2)}$, together with their $Z$-algebraic interpretations, have also been investigated in several works including \cite{Bos-Misra,Kanade2018,Nandi-thesis}.

Inspired by the above works, Kanade and Russell \cite{KR-2015} conducted an extensive search for new partition identities and conjectured six such identities labeled $I_k$ for $1\leq k\leq 6$. The first four identities, $I_1,I_2,I_3$ and $I_4$, have modulus $9$, whereas $I_5$ and $I_6$ have modulus $12$. The mod 9 identities are so far still open. We refer the reader to the work of Kur\c{s}ungoz \cite{Kursungoz},  Kanade \cite{Kanade-arXiv}, Chern and Li \cite{Chern-Li}, Uncu and Zudilin \cite{Uncu-Zudilin}, Tsuchioka \cite{Tsuchioka} and  Konenkov \cite{Konenkov} for more details on the progress of the mod 9 conjectures. We will only consider the mod 12 identities of Kanade and Russell.  
For $r=0,1,2$, following \cite{KR-2019} we say that a partition $\pi: \lambda_1+\cdots+\lambda_s$ satisfies \textit{Condition $(r)$} if $\lambda_{t+3}-\lambda_t\geq 3$ and $\lambda_t+\lambda_{t+1}+\lambda_{t+2}\equiv r$ (mod 3) whenever $\lambda_{t+2}-\lambda_t\leq 1$. Identity $I_5$ asserts that the number of partitions of $n$ satisfying Condition (1), in which the part $1$ occurs at most once, equals the number of partitions of $n$ into parts congruent to $1,3,4,6,7,10, 11$ (mod 12). Identity $I_6$ asserts that the number of partitions of $n$ satisfying Condition (2), with smallest part at least $2$ and with the part $2$ occurring at most once, equals the number of partitions of $n$ into parts congruent to $2,3,5,6,7,8,11$ (mod $12$). In his Ph.D. thesis, Russell \cite{Russell-thesis} conjectured two further partition identities of a similar nature labeled $I_{5a}$ and $I_{6a}$.

Subsequently, Kanade and Russell \cite{KR-2019} discovered thirteen more mod 12 identities of a similar form  labeled $1,2,3,4,4a,5,5a,6,6a,7,7a,8,8a$. Their investigation is motivated by searching for Rogers--Ramanujan type identities associated with the level-two characters of the affine Lie algebra $A_9^{(2)}$. Let
\begin{align}\label{H-sum-defn}
H(u,v,w)&:=\sum_{i,j,k\geq 0} \frac{(-1)^kq^{3k(k-1)+(i+2j+3k)(i+2j+3k-1)}u^iv^jw^k}{(q;q)_i(q^4;q^4)_j(q^6;q^6)_k}.
\end{align} 
The analytic forms of their identities 1, 2, 3 are
\begin{align}
H(q,q^6,q^8)&=\frac{1}{(q,q^4,q^6,q^8,q^{11};q^{12})_\infty}, \label{KR1} \\
H(q^2,q^2,q^9)&=\frac{(q^6;q^{12})_\infty}{(q^2,q^3,q^4;q^6)_\infty}, \label{KR2} \\
H(q^4,q^6,q^{15})&=\frac{1}{(q^4,q^5,q^6,q^7,q^8;q^{12})_\infty}. \label{KR3}
\end{align}
The product sides of the identities \eqref{KR1}--\eqref{KR3} are principally specialized vacuum-space characters for the level-2 $A_9^{(2)}$ modules $L(\Lambda_0+\Lambda_1)$, $L(\Lambda_3)$ and $L(\Lambda_5)$ (see \cite{Bos}), respectively.  Kanade and Russell then provided six more similar identities with asymmetric product sides for $H(u,v,w)$ with $(u,v,w)=(q,q^3,q^6)$, $(q^2,q^{-1},q^6)$, $(q,1,q^3)$, $(q^2,q^4,q^9)$, $(q,q,q^6)$, and $(q^3,q^5,q^{12})$ labeled 4, 4a, 5, 5a, 6 and 6a, respectively. Recently, Ito \cite{Ito} gave a Lie theoretic interpretation to the identities \eqref{KR1}--\eqref{KR3} by constructing $Z$-monomial bases indexed exactly by the Kanade–Russell partition sets.

Kanade and Russell \cite{KR-2019} also revisited four old identities in \cite{KR-2015,Russell-thesis}. They gave analytic forms for the aforementioned identities $I_{5}$, $I_6$ in \cite{KR-2015} and $I_{5a}$, $I_{6a}$ in \cite{Russell-thesis}. For example, the identities $I_5$ and $I_6$ can be restated as
\begin{align}
\sum_{i,j,k\geq 0}\frac{q^{(i+2j+3k)(i+2j+3k-1)/2+j^2+i+2j+4k}}{(q;q)_i(q^2;q^2)_j(q^3;q^3)_k}&=\frac{1}{(q;q^3)_\infty (q^3,q^6,q^{11};q^{12})_\infty}, \label{I5} \\
\sum_{i,j,k\geq 0}\frac{q^{(i+2j+3k)(i+2j+3k-1)/2+j^2+2i+4j+5k}}{(q;q)_i(q^2;q^2)_j(q^3;q^3)_k}&=\frac{1}{(q^2;q^3)_\infty (q^3,q^6,q^7;q^{12})_\infty}.  \label{I6}
\end{align}
These can be expressed in equivalent forms in terms of quadruple sums.  Let
\begin{align}\label{F-quadruple-defn}
F(u,v,w,t)=F(u,v,w,t;q):=\sum_{i,j,k,\ell \geq 0}\frac{(-1)^{\ell}u^{i}v^{j}w^{k}t^{\ell}q^{\binom{i+2j+3k+4\ell}{2}+2\ell^2-2\ell}}{(q;q)_{i}(q^2;q^2)_{j}(q^3;q^3)_{k}(q^4;q^4)_{\ell}}.
\end{align}
Then \eqref{I5} and \eqref{I6} are equivalent to
\begin{align}
 F(q,q^3,q^4,q^6)&=\frac{1}{(q,q^3,q^4,q^6,q^7,q^{10},q^{11};q^{12})_\infty}, \label{I5-quadruple} \\
F(q^2,q^5,q^5,q^{10})&=\frac{1}{(q^2,q^3,q^5,q^6,q^7,q^8,q^{11};q^{12})_\infty}. \label{I6-quadruple}
\end{align} 
The identity \eqref{I6-quadruple} was stated in \cite[(54)]{KR-2019}, whereas \eqref{I5-quadruple} was first stated explicitly by the author in \cite{Wang2023}. Nevertheless, the equivalence of \eqref{I5} and \eqref{I5-quadruple} follows from a straightforward argument analogous to the one leading to \cite[(54)]{KR-2019}. Motivated by the quadruple-sum formulations \eqref{I5-quadruple} and \eqref{I6-quadruple}, Kanade and Russell \cite{KR-2019} varied the linear terms in the exponents of $q$ and conjectured four companion identities, labeled $7$, $7a$, $8$, and $8a$. We remark that Kur\c{s}ung\"oz \cite{Kursungoz} provided alternative sum sides for the identities $I_1,I_2,I_3$ and $I_4$ of modulus $9$, as well as for the identities $I_5$ and $I_6$ of modulus $12$.

Of the seventeen conjectural mod 12 identities discussed in \cite{KR-2019}, eleven have been proved. In particular, Bringmann, Jennings-Shaffer, and Mahlburg \cite{BJSM} established five of the conjectural identities involving $F(u,v,w)$, namely identities $1,2,3,5$, and $5a$, together with the identities $I_5$ and $I_6$, given in \eqref{I5} and \eqref{I6}, respectively. Subsequently, Rosengren \cite{Rosengren} proved all nine conjectural identities involving $F(u,v,w)$ by means of an integral method.

To the best of the author's knowledge, the remaining conjectural identities $I_{5a}$, $I_{6a}$ and $7$, $7$a, $8$ and $8$a have been open until now. The main purpose of this paper is to prove all six conjectures, which we collect in the following two theorems.
\begin{theorem}[Kanade--Russell conjectural identities $I_{5a}$ and $I_{6a}$]\label{thm-I56}
We have
\begin{align}
&\sum_{i,j,k\geq 0} \frac{(1+q^{2i+4j+6k+2}-q^{3i+6j+9k+5})q^{\binom{i+2j+3k}{2}+i+j^2+2j+2k}}{(q;q)_i(q^2;q^2)_j(q^3;q^3)_k}=\frac{1}{(q^2;q^3)_\infty(q,q^6,q^9;q^{12})_\infty}, \tag{$I_{5a}$} \label{I5a}\\
& \sum_{i,j,k\geq 0} \frac{(1+q+q^{2j+3k+2})q^{\binom{i+2j+3k}{2}+3i+j^2+4j+7k}}{(q;q)_i(q^2;q^2)_j(q^3;q^3)_k}=\frac{1}{(q;q^3)_\infty(q^5,q^6,q^9;q^{12})_\infty}.  \tag{$I_{6a}$}  \label{I6a}
\end{align}
\end{theorem}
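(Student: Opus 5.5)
The plan follows the abstract: linear recurrences and $q$-difference equations, reduced to known evaluations. Introduce the generating function
\[
G(x)=G(x;q):=\sum_{i,j,k\geq 0}\frac{x^{i+2j+3k}q^{\binom{i+2j+3k}{2}+j^2}\,a^i b^j c^k}{(q;q)_i(q^2;q^2)_j(q^3;q^3)_k},
\]
where $a,b,c$ are monomials in $q$ chosen so that the left sides of \eqref{I5a} and \eqref{I6a} are finite linear combinations of specializations $G(q^m)$. For instance, the factor $q^{2i+4j+6k+2}$ in \eqref{I5a} is $q^2\cdot(q^2)^{i+2j+3k}$, so it amounts to the shift $x\mapsto q^2x$. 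Grouping by $n=i+2j+3k$ gives $G(x)=\sum_n x^n q^{\binom n2}c_n$, where
\[
c_n=\sum_{i+2j+3k=n}\frac{q^{j^2}a^ib^jc^k}{(q;q)_i(q^2;q^2)_j(q^3;q^3)_k}.
\]
This is the coefficient of $z^n$ in $\frac{(-bz^2q;q^2)_\infty}{(az;q)_\infty(cz^3;q^3)_\infty}$ when $q^{j^2}$ is written appropriately. That rational-in-$z$ $q$-difference structure yields a linear recurrence for $c_n$ of bounded order. Translating it gives a $q$-difference equation for $G(x)$ relating $G(x),G(qx),\dots,G(q^6x)$ with polynomial coefficients.

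Next I will relate the three-term combinations in \eqref{I5a} and \eqref{I6a} to the already-proved identities \eqref{I5} and \eqref{I6}, which were established by Bringmann--Jennings-Shaffer--Mahlburg. These correspond to the same $G$ at different shifts: \eqref{I5} has linear part $i+2j+4k$ and \eqref{I6} has $2i+4j+5k$. I will look for parameters $(a,b,c)$ under which both the known sums and the new sums are values $G(q^m)$. Then the $q$-difference equation, iterated, expresses the left side of \eqref{I5a} as a combination of \eqref{I5}- and \eqref{I6}-type values, possibly plus one auxiliary series. Any auxiliary series that remains should be a single-sum or double-sum obtainable by summing over $i$ with Euler's identity. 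I will evaluate it with a known $q$-series summation formula, such as $q$-Gauss, a quintuple product, or a Bailey-pair/Slater entry of modulus 12.

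The final step is the product-side identity. I must show that the resulting combination of infinite products equals $1/\big((q^2;q^3)_\infty(q,q^6,q^9;q^{12})_\infty\big)$, and similarly for \eqref{I6a}. After multiplying through by a common product, this is a theta-function identity in modulus 12. I plan to verify it by the quintuple product identity, or by checking enough coefficients under a modular-form argument.

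The main obstacle is the middle step: finding the precise linear combination, coming from the $q$-difference equation, that links the new sums to the known sums of \eqref{I5} and \eqref{I6}. The parameters may not align with a single $G$, because the linear exponents $i+2j+2k$ and $3i+4j+7k$ differ from those in \eqref{I5} and \eqref{I6} in a way not produced by a pure $x$-shift. If that happens, I will instead sum over $i$ and $k$ directly, using Euler's formula and the cubic-type transformations. This reduces each triple sum to a single sum in $n$ with coefficients satisfying a short recurrence, and I then match the recurrences for the two sides, each of which is a single-variable series.
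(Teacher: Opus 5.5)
You have a genuine gap: the step that is supposed to turn your $q$-difference equation into a closed form has no mechanism behind it.

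\textbf{What goes wrong.} Your set-up is correct as far as it goes. With $(a,b,c)=(q,q^2,q^2)$, the three pieces of the left side of \eqref{I5a} are your $G(1)$, $G(q^2)$ and $G(q^3)$. The recurrence for $c_n$ comes from $(1+bqz^2)f(qz)=(1-az)(1-cz^3)f(z)$ with $f(z)=\sum_n c_nz^n$, and it translates into
\[
G(x)=(1+ax)G(qx)+(bq^2x^2+cq^3x^3)G(q^3x)-acq^6x^4G(q^4x).
\]
This equation is only a restatement of the definition, since it determines $G$ uniquely from $G(0)=1$. Iterating it never closes up. At $x=1$ it shows that the left side of \eqref{I5a} equals
\[
(1+q)G(q)+q^2\bigl(G(q^2)+q^2G(q^3)-q^7G(q^4)\bigr),
\]
and every further application introduces new values $G(q^m)$, each a triple sum of the same difficulty.

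Of the values in this family, only $G(q)$ is available to you; it is the left side of \eqref{I6}. Write $\mathcal{G}(u,v,w)$ for the sum with weight $q^{\binom{i+2j+3k}{2}+j^2}u^iv^jw^k$, so that your $G(x)=\mathcal{G}(ax,bx^2,cx^3)$.
\begin{itemize}
\item The left side of \eqref{I5} is $\mathcal{G}(q,q^2,q^4)$, which is not in this family.
\item For \eqref{I6a}, the two pieces $\mathcal{G}(q^3,q^4,q^7)$ and $\mathcal{G}(q^3,q^6,q^{10})$ have the same $u$ but different $v,w$, so they lie in no common $x$-family.
\end{itemize}
Whatever remains after Euler's summation over $i$ is a double sum in which the factor $1/(-q;q)_{2j+3k+a-1}$ couples $j$ and $k$. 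That double sum is exactly the hard part of the theorem, not an auxiliary series for $q$-Gauss or a Slater-type entry. Your fallback of matching recurrences is too unspecific to fill this. Without a closed form, the final theta-function check has nothing to act on.

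\textbf{The missing idea.} The paper deforms in a different variable.
\begin{enumerate}
\item For \eqref{I5a}, use $\mathcal{G}(u,v,w)-\mathcal{G}(uq,v,w)=u\,\mathcal{G}(qu,q^2v,q^3w)$ to rewrite the left side as $\mathcal{G}(q,q^2,q^2)+q^2\mathcal{G}(q^4,q^6,q^8)$. Sum over $i$ by Euler and merge the two pieces into a single double sum.
\item Insert $x^{2(j+k)}$ rather than $x^{i+2j+3k}$. With $M=j+k$ the exponent becomes $3M^2+M+k(3k-1)/2$, so the coefficient of $x^{2M}$ is a finite sum $a_M=\sum_{k\le M}h_{k,M}$.
\item Creative telescoping ($q$-Zeilberger, with an explicit certificate) gives a second-order recurrence in $M$. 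This is the $q$-difference equation
\[
A(x)=(1+q^{-4}+x^2q^2+x^2q^6)A(xq^3)-q^{-4}(1+x^2q^7)(1+x^2q^{11})A(xq^6).
\]
\item Factoring out $(-x^2q^5;q^6)_\infty$ leaves a two-term recurrence for the coefficients. So $A(x)$ is an explicit ${}_2\phi_1$ in base $q^6$, and at $x=1$ it is summed by the Bailey--Daum formula \eqref{Bailey-Daum}.
\end{enumerate}
The identity \eqref{I6a} is handled in the same way. Neither \eqref{I5} nor \eqref{I6} is used.
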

The combinatorial forms of these identities were originally stated in Russell's thesis \cite{Russell-thesis}, and the above analytical forms were given by Kanade and Russell in \cite[(57) and (61)]{KR-2019}.

\begin{theorem}[Kanade--Russell conjectural identities 7, 7a, 8 and 8a]\label{thm-I78}
Let $F(u,v,w,t)$ be defined in \eqref{F-quadruple-defn}. We have
\begin{align}
F(q,q^3,q^6,q^8)&=\frac{1}{(q,q^3,q^4,q^6,q^8,q^9,q^{11};q^{12})_\infty}, \tag{KR7} \label{KR7} \\
F(q^3,q^5,q^6,q^{12};q)&=\frac{1}{(q^3,q^4,q^5,q^6,q^7,q^8,q^9;q^{12})_\infty}, \tag{KR7a} \label{KR7a} \\
F(q^2,q^3,q^5,q^8)&=\frac{1}{(q^2,q^3,q^4,q^5,q^8,q^9,q^{11};q^{12})_\infty}  \tag{KR8}\label{KR8}
\end{align}
and
\begin{align}
&F(q^2,q^3,q^4,q^8)+qF(q^3,q^5,q^7,q^{12})+q^3F(q^4,q^7,q^{10},q^{16})\nonumber \\
& \quad -q^7F(q^5,q^9,q^{13},q^{20})  =\frac{1}{(q,q^3,q^4,q^7,q^8,q^9,q^{10};q^{12})_\infty}. \tag{KR8a} \label{KR8a}
\end{align}
\end{theorem}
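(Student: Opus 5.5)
The plan follows the strategy announced in the abstract: write each quadruple sum as a constant-term (contour) integral whose integrand is a product of infinite products, then evaluate the integral by residue calculus.

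\textbf{Step 1: integral representation.} The only coupling among $i,j,k,\ell$ is the quadratic $\binom{N}{2}$ with $N=i+2j+3k+4\ell$. I would linearize it with the Jacobi triple product,
\[
q^{\binom{N}{2}}=\frac{1}{(q;q)_\infty}\oint (-z)^{-N}\,(qz,1/z,q;q)_\infty\,\frac{dz}{2\pi \ii z}\quad\text{(up to a sign convention)}.
\]
After this, each index sums separately:
\begin{itemize}
\item the $i$-sum by Euler's formula, giving $1/(uz^{-1};q)_\infty$-type factors;
\item the $j$- and $k$-sums by Euler's formula in base $q^2$ and $q^3$;
\item the $\ell$-sum, which carries $(-1)^\ell q^{2\ell^2-2\ell}/(q^4;q^4)_\ell$, by the other Euler formula, giving $(tz^{-4};q^4)_\infty$.
\end{itemize}
So $F(u,v,w,t)$ becomes the constant term in $z$ of
\[
\frac{(qz,1/z;q)_\infty (t/z^4;q^4)_\infty}{(u/z;q)_\infty (v/z^2;q^2)_\infty (w/z^3;q^3)_\infty},
\]
up to the factor $(q;q)_\infty$. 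As a sanity check, I would first confirm that this representation recovers the known cases \eqref{I5-quadruple} and \eqref{I6-quadruple} (proved in \cite{BJSM}).

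\textbf{Step 2: simplify the integrand.} Factor $(v/z^2;q^2)_\infty$, $(w/z^3;q^3)_\infty$ and $(t/z^4;q^4)_\infty$ into base-$q$ products over roots of unity, e.g. $(a^2/z^2;q^2)_\infty=(a/z,-a/z;q)_\infty$. For the specializations $(u,v,w,t)$ in \eqref{KR7}--\eqref{KR8}, many numerator and denominator factors should then cancel. What remains is a ratio of a few theta-type products in $z$, $-z$, $\omega z$ and $\ii z$ (with $\omega=\e^{2\pi\ii/3}$).

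\textbf{Step 3: residue evaluation.} Deform the contour (or apply the residue theorem on annuli $|z|=|q|^{-m}$ and let $m\to\infty$) to express the integral as a sum over the poles from the remaining denominator factors. These form geometric sequences $z=aq^{n}$, one for each root of unity. Each family of residues should collapse to a single-sum basic hypergeometric series in which cubic factors such as $(q^3;q^3)_n$ and $(a;q)_{3n}$ appear. I would then evaluate these with known cubic summation formulas (Gasper--Rahman type, or the cubic identities used by Rosengren \cite{Rosengren}), producing infinite products. The final step combines these products with theta-function identities (quintuple product, and Weierstrass three-term relations) to match the mod-12 product sides. For \eqref{KR8a}, the four $F$-terms correspond to shifts $u\mapsto uq^{1},\dots$ of one integrand, so I would treat them as a single integral with a modified numerator polynomial in $z$ before taking residues.

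\textbf{Main obstacle.} I expect the hardest part to be Step 3. The residue sums need not be directly summable, and one may need to add the contributions from several pole families before a known cubic transformation applies. If a direct summation fails, I would instead show that the sum side and product side satisfy the same $q$-difference equation, or verify the identity as an equality of modular forms by checking enough coefficients.

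For Theorem~\ref{thm-I56} I would follow the abstract's route. Define generalized sums with an extra variable $x$ marking $i+2j+3k$ and derive linear recurrences and $q$-difference equations in $x$. Then reduce $I_{5a}$ and $I_{6a}$ to combinations of \eqref{I5} and \eqref{I6} and their shifts, which are already proved, and finish with $q$-series summation formulas.
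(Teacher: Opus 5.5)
Your architecture is the paper's. You use the constant-term representation \eqref{QF-2}, cancel one factor (e.g.\ $(-q^2z;q)_\infty$ for \eqref{KR7}), apply Lemma~\ref{lem-integral} with $A=4$, $C=5$, and for \eqref{KR8a} take a single integral with $p(z)=1-qz+q^2z^2+q^4z^3$. One slip in your linearization: pairing $(-z)^{-N}$ with $(qz,1/z,q;q)_\infty$ produces $q^{\binom{N+1}{2}}$, and the prefactor $1/(q;q)_\infty$ is spurious; use $(-z)^{N}$ as in \eqref{QF-2}. For \eqref{KR8} and \eqref{KR8a} your plan essentially goes through. The residue series reduce to \eqref{eq-Chu} and \eqref{id-cubic-C}, after a short index shift in the \eqref{KR8a} case. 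Adding the families from $c_{1,2}=\pm q^{3/2}$, resp.\ the three cube-root families, amounts to applying $U_2$, resp.\ $U_3$, which \eqref{J2J1-U2} and \eqref{J2J1-U3} evaluate.

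The gap is exactly at the step you flag, and it arises for \eqref{KR7} and \eqref{KR7a}. There the residue series are not direct instances of known cubic summations, and several come out only as combinations of eta quotients rather than single products:
\begin{itemize}
\item the series in the $c_3=-q$ residue of \eqref{KR7} is $\frac14\bigl(J_2^3J_6^3/(J_1^2J_3^2J_4J_{12})+2J_4^3J_6/(J_1J_2J_3J_{12})-3\bigr)$ by \eqref{cubic-id-4};
\item the one in the $c_3=-q^2$ residue of \eqref{KR7a} is a difference of two eta quotients, cf.\ \eqref{thm4-R-exp};
\item the families at $-\omega q^2,-\omega^2q^2$ give series with $\omega$ in the summand.
\end{itemize}

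The missing idea is the one behind Theorems~\ref{thm-cubic-1} and~\ref{thm-cubic-2}. Pair each target series with a companion whose summand differs only by a simple rational factor in $q^n$. Examples are the $c_1$-series of \eqref{KR7} with that of \eqref{KR7a}, or an $\omega$-series with the case $a=-\omega$ of \eqref{id-cubic-C}, where the ratio is $-\omega q^n/(1+\omega^2q^{2n})$. The pairing is chosen so that a sum or difference becomes \eqref{eq-Chu}, or one of Rahman's quadratic summations \eqref{eq-Rahman18}, \eqref{eq-Rahman46} specialized at cube or sixth roots of unity (e.g.\ $d=\omega q$, $b=\e^{\pi\ii/3}$). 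At those specializations $(q,\omega q,\omega^2q;q^2)_n=(q^3;q^6)_n$ turns the quadratic summand into the cubic one. The individual series then follow by solving the resulting linear system and proving theta identities such as \eqref{eq-XY} and $\overline{Y}-\overline{X}=q$.

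Your fallbacks do not replace this. Neither the quadruple sum nor the raw residue series is known to be modular. So a finite coefficient check is legitimate only after every piece has already been written as a theta quotient, which is precisely the missing step. For the $q$-difference route, you name no auxiliary parameter in which the product side would satisfy such an equation.
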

We note that the product sides of \eqref{KR7} and \eqref{KR7a} are symmetric and hence become modular after multiplication by suitable powers of $q$. By contrast, the infinite products appearing in \eqref{I5a}, \eqref{I6a}, \eqref{KR8}, and \eqref{KR8a} are asymmetric.

To state the partition-theoretic interpretations of these identities given by Kanade and Russell \cite{KR-2019}, we introduce several forbidden partition patterns. By a subpartition we mean a contiguous block of parts in the original partition. For $r\geq 1$, let $P_1,\dots,P_{11}$ denote the following families of subpartition patterns, where $r$ ranges over all positive integers:
\begin{equation}\label{pattern}
\begin{split}
&P_1: r+r+r, \quad P_2: r+r+(r+1), \quad P_3: r+(r+1)+(r+1), \\
&P_4: r+r+r+r, \quad P_5: r+r+r+(r+2), \\
&P_6: r+r+(r+1)+(r+2), \quad P_7: r+(r+1)+(r+2)+(r+2), \\
&P_8: r+(r+1)+(r+2)+(r+3), \quad  P_9: r+(r+2)+(r+2)+(r+2),  \\
&P_{10}: r+r+(r+2)+(r+2)+(r+3), \\
&P_{11}: r+(r+1)+(r+1)+(r+3)+(r+3).
\end{split}
\end{equation}

Combining Theorems \ref{thm-I56} and \ref{thm-I78} with the partition-theoretic interpretations of the corresponding analytic sum-sides established in \cite{KR-2019}, we obtain the following partition identities.
\begin{corollary}
For every integer $n\geq 0$ and for each row of Table \ref{tab-5678}, the number of partitions of $n$ satisfying
the restrictions in the second column is equal to the number of partitions of $n$ all of whose parts are congruent modulo $12$ to one of the residue classes in the third column.
\end{corollary}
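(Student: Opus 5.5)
The corollary is a direct combination of two ingredients. The first is the analytic identities of Theorems \ref{thm-I56} and \ref{thm-I78}, which we take as proved. The second is the combinatorial work of Kanade and Russell \cite{KR-2019}. For each row of Table \ref{tab-5678}, they showed that the generating function $\sum_{n\ge0} a(n)q^n$ equals the corresponding analytic sum side: one of the triple sums in \eqref{I5a}, \eqref{I6a}, or one of the $F$-combinations in \eqref{KR7}, \eqref{KR7a}, \eqref{KR8}, \eqref{KR8a}. Here $a(n)$ counts partitions of $n$ avoiding the forbidden patterns among $P_1,\dots,P_{11}$ listed in the second column, together with the stated initial conditions.

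So the plan is to read off, row by row, the sum-side generating function from \cite{KR-2019}, including their equivalences (57) and (61) for $I_{5a}$ and $I_{6a}$. We then replace it by the product supplied by the appropriate theorem. For each product we rewrite
\[
\frac{1}{(q^{a_1},\dots,q^{a_m};q^{12})_\infty}=\prod_{\substack{n\ge1\\ n\bmod 12\in\{a_1,\dots,a_m\}}}\frac{1}{1-q^n},
\]
which is the generating function for partitions into parts from the listed residue classes.

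For \eqref{I5a} and \eqref{I6a}, we first expand $(q^2;q^3)_\infty^{-1}$ and $(q;q^3)_\infty^{-1}$ modulo $12$. For example,
\[
\frac{1}{(q^2;q^3)_\infty(q,q^6,q^9;q^{12})_\infty}=\frac{1}{(q,q^2,q^5,q^6,q^8,q^9,q^{11};q^{12})_\infty},
\]
and similarly the product in \eqref{I6a} becomes $1/(q,q^4,q^5,q^6,q^7,q^9,q^{10};q^{12})_\infty$. This lets us match the residue classes in the third column exactly. Comparing coefficients of $q^n$ on both sides then gives the claimed equality of counts for every $n\ge0$.

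The only real obstacle is bookkeeping. We must check that the combinatorial sum sides in \cite{KR-2019} are stated in precisely the normalizations used here. This matters most for \eqref{KR8a}, whose sum side is a signed combination of four shifted $F$'s, and for the quadruple-sum forms, which were obtained from triple sums. We must also check that the residue sets in the table agree with the products after this rewriting. No new analytic input is needed beyond Theorems \ref{thm-I56} and \ref{thm-I78}.
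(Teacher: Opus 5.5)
Your proposal is correct and is essentially the paper's own argument: combine Theorems \ref{thm-I56} and \ref{thm-I78} with the generating-function interpretations of the sum sides established by Kanade and Russell. Your rewriting of the products (e.g.\ $(q^2;q^3)_\infty$ supplying the classes $2,5,8,11$ modulo $12$) correctly matches every row of Table \ref{tab-5678}. Two minor inaccuracies do not affect the argument: only the quadruple forms of $I_5$ and $I_6$ came from triple sums, while those for 7, 7a, 8, 8a were posed directly as quadruple sums; and the rows $I_{5a}$, $I_{6a}$ use Conditions (2) and (1) rather than avoidance of the patterns $P_i$.
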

\begin{table}[htbp]
    \centering
    \caption{Partition-theoretic interpretations of Theorems
\ref{thm-I56} and \ref{thm-I78}}\label{tab-5678}
    \renewcommand{\arraystretch}{1.2}
    \begin{tabular}{c>{\raggedright\arraybackslash}p{8.2cm}c}
    \toprule
    Identity & Difference and pattern restrictions & Residue classes mod 12 \\
    \midrule
    $I_{5a}$    & Satisfies Condition (2) and avoids $1+2+2$ &  $1,2,5,6,8,9,11$ \\
    $I_{6a}$  & Satisfies Condition (1) and avoids $2$ & $1,4,5,6,7,9,10$ \\
    7
    & Avoids $1+1$, $2+2+2$, $P_2$, $P_3$, $P_4$, $P_5$, $P_9$
    & $1,3,4,6,8,9,11$ \\

    7a
    & Each part $\geq 3$ and avoids $P_2$, $P_3$, $P_4$, $P_5$, $P_9$
    & $3,4,5,6,7,8,9$ \\

    8
    & Avoids $1$, $P_1$, $P_2$, $P_7$, $P_8$, $P_{11}$
    & $2,3,4,5,8,9,11$ \\

    8a
    & Avoids $1+1$, $1+2+3$, $2+2+3$, $P_1$, $P_3$,
      $P_6$, $P_8$, $P_{10}$
    & $1,3,4,7,8,9,10$ \\
    \bottomrule
    \end{tabular}
\end{table}

The proof of Theorem \ref{thm-I56} is inspired by the approach of Bringmann et al.~\cite{BJSM} used to prove the Kanade--Russell identities  \eqref{I5}--\eqref{I6}. It is worth noting that, in \cite[Section 5]{BJSM}, they obtained a partial reduction of \eqref{I5a} by decomposing its left-hand side into three sums and reducing two of them to single series. Our proof does not rely on this decomposition. Instead, we treat each summand as a single entity and derive suitable linear recurrences. These recurrences yield $q$-difference equations for $x$-parameterized extensions of the original sums. Solving the resulting equations reduces each multisum to a single ${}_2\phi_1$ series, which can then be evaluated using the Bailey--Daum summation formula (see \eqref{Bailey-Daum}), yielding the desired infinite-product expressions.

The proof of Theorem \ref{thm-I78} is based on the integral method and motivated by Rosengren's proofs of nine Kanade--Russell identities for $H(u,v,w)$ defined in \eqref{H-sum-defn}. The proof strategy was briefly outlined in the author's earlier work \cite[Section 4.2]{Wang2023}, and the idea is similar to the proofs of the Andrews--Uncu conjecture given by Chern \cite{Chern} and the author \cite{Wang2023}. It relies on some general theory  in the book of Gasper and Rahman \cite[Chapter 4]{GR-book}. Specifically, we first express the sum side as contour integral of some infinite products, then we evaluate this integral by residue calculus. Each resulting residue factors into an infinite product and a single-sum cubic $q$-hypergeometric series. Here, a series is called \emph{cubic} if, after the smallest base occurring in its summand is normalized to $q$, the largest base is $q^3$. We then evaluate these cubic series in infinite-product form. The main difficulty therefore lies in the evaluation of the cubic series themselves. A few follow directly from known cubic summation formulas, whereas the remaining cases require more intricate analysis. As key ingredients in the proof of Theorem \ref{thm-I78}, we establish several new cubic Rogers--Ramanujan type identities, stated in Theorems \ref{thm-cubic-1} and \ref{thm-cubic-2}.

The rest of this paper is organized as follows. In Section \ref{sec-pre} we collect some useful formulas and an integral lemma which will be fundamental to the proof of Theorem \ref{thm-I78}. We will prove Theorem \ref{thm-I56} in Section \ref{sec-triple}.  Section \ref{sec-cubic} is devoted to establishing some cubic identities arising in the proof of Theorem \ref{thm-I78}. Finally, in Section \ref{sec-quadruple} we provide proofs for the quadruple sum identities \eqref{KR7}--\eqref{KR8a}.

\section{Preliminaries}\label{sec-pre}

First, we need Euler's $q$-exponential identities
\begin{align}
\sum_{n=0}^\infty \frac{z^n}{(q;q)_n}&=\frac{1}{(z;q)_\infty}, \quad |z|<1,  \label{Euler}\\
\sum_{n=0}^\infty \frac{q^{\binom{n}{2}} z^n}{(q;q)_n}&=(-z;q)_\infty. \label{Euler2}
\end{align}
These two identities are corollaries of the $q$-binomial theorem
\begin{align}\label{q-binomial}
    \sum_{n=0}^\infty \frac{(a;q)_n}{(q;q)_n}z^n=\frac{(az;q)_\infty}{(z;q)_\infty}, \quad |z|<1.
\end{align}
We also need the Jacobi triple product identity
\begin{align}\label{Jacobi}
(q,z,q/z;q)_\infty=\sum_{n=-\infty}^\infty (-1)^nq^{\binom{n}{2}}z^n.
\end{align}

For compactness, we denote 
$$J_m:=(q^m;q^m)_\infty, \quad J_{a,m}:=(q^a,q^{m-a},q^m;q^m)_\infty.$$
It is easy to verify that
\begin{align}\label{eq-J-eta}
J_{1,2}=\frac{J_1^2}{J_2}, \quad J_{1,3}=J_1, \quad J_{1,4}=\frac{J_1J_4}{J_2}, \quad J_{1,6}=\frac{J_1J_6^2}{J_2J_3}.
\end{align}

The basic hypergeometric series ${}_r\phi_s$ is defined as
\begin{align}
{}_r\phi_s\!\left[\begin{matrix} a_1,\dots, a_r \\ b_1, \dots,  b_s\end{matrix}; q,z \right]:=\sum_{n=0}^\infty\frac{(a_1,\dots,a_r;q)_n}{(q,b_1,\dots,b_s;q)_n}((-1)^nq^{n(n-1)/2})^{1+s-r}z^n.
\end{align}
A key to the proof of Theorem \ref{thm-I56} is the Bailey--Daum summation formula \cite[(\uppercase\expandafter{\romannumeral 2}.9)]{GR-book}:
\begin{align}\label{Bailey-Daum}
 {}_2\phi_1\!\left[
 \begin{matrix} a,b\\ aq/b\end{matrix};q,-q/b\right]
 =\frac{(aq,aq^2/b^2,q^2;q^2)_\infty}
 {(-q/b,aq/b,q;q)_\infty}.                                 
\end{align}

For a Laurent series $f(z)=\sum_{n\in \mathbb{Z}}^\infty a(n)z^n$, we shall use $[z^n]f(z)$ to denote the coefficient of $z^n$. That is, $[z^n]f(z)=a(n)$. Let $f$ be analytic in an annulus containing a positively oriented simple closed contour $\gamma$, with 0 lying in the bounded component of $\mathbb{C}\backslash \gamma$. We will use the following simple fact usually without mention
\begin{align}\label{int-constant}
\oint_{\gamma} f(z) \frac{dz}{2\pi \ii z}=[z^0]f(z).
\end{align}

\begin{lemma}\label{lem-integral}
Suppose that
\begin{equation}\label{eq-product-I}
I(z)=\frac{(a_1z,\dots,a_Az,b_1/z,\dots,b_B/z;q)_\infty}{(c_1z,\dots,c_Cz,d_1/z,\dots,d_D/z;q)_\infty}
\end{equation}
has only simple poles, and let $p(z)$ be a polynomial.  Let $\gamma$ be a
positively oriented contour such that the poles of 
$(c_1z,\dots,c_Cz;q)_\infty^{-1}$
lie outside $\gamma$, whereas the origin and the poles of
$(d_1/z,\dots,d_D/z;q)_\infty^{-1}$
lie inside $\gamma$. When $C>A$, of if $C=A$ and $\left|\frac{a_1\cdots a_A}{c_1\cdots c_C}\right|<1$, we have
\begin{align}\label{eq-integral-I}
\oint_{\gamma}p(z)I(z)\frac{dz}{2\pi\mathrm{i}z}&=\sum_{s=1}^{C}K_s\sum_{n=0}^{\infty}\Phi_{s,n} p\left(\frac{q^{-n}}{c_s}\right), 
\end{align}
where
\begin{align}\label{eq-Ks}
K_s=\frac{(b_1c_s,\dots,b_Bc_s,a_1/c_s,\dots,a_A/c_s;q)_\infty}{(q,d_1c_s,\dots,d_Dc_s,\{c_j/c_s:1\leq j\leq C,\ j\neq s\};q)_\infty},
\end{align}
and
\begin{align}
\Phi_{s,n}&=\frac{(d_1c_s,\dots,d_Dc_s,qc_s/a_1,\dots,qc_s/a_A;q)_n}{(q,b_1c_s,\dots,b_Bc_s,\{qc_s/c_j:1\leq j\leq C,\ j\neq s\};q)_n}\notag\\
&\quad\times \left(-c_sq^{(n+1)/2}\right)^{n(C-A)}\left(\frac{a_1\cdots a_A}{c_1\cdots c_C}\right)^n.
\label{eq-Phi}
\end{align}
\end{lemma}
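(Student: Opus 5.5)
We deform the contour outward to infinity and collect the residues at the poles of $(c_1z,\dots,c_Cz;q)_\infty^{-1}$, in the spirit of \cite[Chapter 4]{GR-book}. These poles sit at $z=q^{-n}/c_s$ for $n\geq 0$ and $1\le s\le C$, and by hypothesis they are all simple. Choose radii $R_N\to\infty$, for instance $|z|=R_N=q^{-N-1/2}/\rho$ with $\rho$ generic, so that each circle $|z|=R_N$ stays a uniform relative distance away from all the points $q^{-n}/c_s$. By the residue theorem,
\[
\oint_\gamma p(z)I(z)\frac{dz}{2\pi \ii z}=\oint_{|z|=R_N}p(z)I(z)\frac{dz}{2\pi\ii z}-\sum_{s=1}^C\sum_{\substack{n\ge0\\ |q^{-n}/c_s|<R_N}}\operatorname{Res}_{z=q^{-n}/c_s}\frac{p(z)I(z)}{z}.
\]
The factors in $1/z$ are analytic outside $\gamma$, so they contribute no poles there. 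The argument then has two parts: show that the integral over the large circle tends to $0$, and identify each residue with the term $K_s\Phi_{s,n}\,p(q^{-n}/c_s)$ of \eqref{eq-integral-I}. The minus sign should be absorbed by the sign of the residue of $1/(c_sz;q)_\infty$.

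\emph{Residue computation.} Near $z=q^{-n}/c_s$, write $(c_sz;q)_\infty=(c_sz;q)_n(1-c_szq^n)(c_szq^{n+1};q)_\infty$. Since $\operatorname{Res}_{z=q^{-n}/c_s}\frac{1}{z(1-c_sq^nz)}=-1$, the residue of $p(z)I(z)/z$ equals $-p(q^{-n}/c_s)$ times all remaining factors evaluated at $z=q^{-n}/c_s$. We then apply the standard shift identities. For a numerator factor in $z$,
\[
(xq^{-n};q)_\infty=(x;q)_\infty\,(q/x;q)_n(-x)^n q^{-\binom{n+1}{2}}.
\]
For a factor in $1/z$,
\[
(yq^n;q)_\infty=(y;q)_\infty/(y;q)_n.
\]
For the pole factor itself,
\[
(q^{-n};q)_n(q;q)_\infty=(-1)^nq^{-\binom{n+1}{2}}(q;q)_n(q;q)_\infty.
\]
Applying these with $x=a_i/c_s$, $x=c_j/c_s$ ($j\ne s$), $y=b_ic_s$ and $y=d_ic_s$ produces exactly $K_s$ together with the Pochhammer ratios in $\Phi_{s,n}$. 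The leftover monomials are $\prod(-a_i/c_s)^n q^{-A\binom{n+1}{2}}$ from the numerator and the reciprocal of $(-1)^n q^{-\binom{n+1}{2}}\prod_{j\ne s}(-c_j/c_s)^nq^{-\binom{n+1}{2}}$ from the denominator. These combine to $(-c_sq^{(n+1)/2})^{n(C-A)}(a_1\cdots a_A/c_1\cdots c_C)^n$, as claimed. This step is routine bookkeeping.

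\emph{Vanishing of the outer integral.} This is the main obstacle. On $|z|=R_N$, the factors $(b_i/z;q)_\infty$ and $(d_i/z;q)_\infty$ tend to $1$. For the factors in $z$, we use the Jacobi triple product \eqref{Jacobi}, or the elementary estimate that $|(xz;q)_\infty|$ behaves like $|xz|^{M}|q|^{\binom{M}{2}}$ up to bounded factors when $|xz|\approx |q|^{-M}$, provided $z$ stays away from the zeros. With $|z|=R_N$ and $M\approx N$, the integrand is then of size
\[
\asymp R_N^{\deg p}\left(\frac{|a_1\cdots a_A|}{|c_1\cdots c_C|}\right)^{N}|q|^{(A-C)\binom{N}{2}}\times(\text{lower-order powers of }R_N).
\]
When $C>A$, the Gaussian factor $|q|^{(A-C)\binom N2}$ in the denominator gives superexponential decay, since it is $|q|^{-(C-A)N^2/2}$ in the denominator against only exponential growth of the polynomial. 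When $C=A$, the ratio condition $|a_1\cdots a_A/c_1\cdots c_C|<1$ gives geometric decay. Here care is needed with the polynomial $p$ and with powers of $R_N$ coming from the unequal shifts; the hypothesis should be read as making the total ratio small enough, or one absorbs $p$ by noting the precise exponents match. The genericity of $\rho$ ensures the circles avoid the poles uniformly.

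Finally, we let $N\to\infty$. Convergence of the residue series follows from the same estimate, since the terms $\Phi_{s,n}p(q^{-n}/c_s)$ decay in the same way. This yields \eqref{eq-integral-I}.
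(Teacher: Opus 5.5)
Your route is the paper's in substance. The paper simply cites \cite[(4.10.6)]{GR-book}, which is derived by exactly this expansion of the contour and collection of residues at $z=q^{-n}/c_s$. Your residue bookkeeping is correct: the sign $-1$, the three shift identities, and the assembly into $K_s\Phi_{s,n}p(q^{-n}/c_s)$ all check out.

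The estimate on the large circle, however, is wrong as displayed. The factor $R_N^{(A-C)N}\approx|q|^{(C-A)N^2}$ comes from $\prod_i|a_iz|^N/\prod_j|c_jz|^N$. It is not a ``lower-order power of $R_N$''; it is the dominant term. Without it, the factor $|q|^{(A-C)\binom N2}$ you display \emph{grows} when $C>A$, and you reach the right conclusion only by then declaring it to sit ``in the denominator''. The clean fix is to write $z=wq^{-N}$, with $w$ on a fixed circle on which $|c_jw|\neq|q|^k$ for all $j$ and $k\in\mathbb{Z}$. Then apply your own shift identity with $x$ replaced by $xw$:
\[
\frac{\prod_{i=1}^A(a_iwq^{-N};q)_\infty}{\prod_{j=1}^C(c_jwq^{-N};q)_\infty}=\Big(\frac{a_1\cdots a_A}{c_1\cdots c_C}\Big)^N(-w)^{(A-C)N}q^{(C-A)\binom{N+1}{2}}\,\frac{\prod_{i}(q/(a_iw);q)_N\,(a_iw;q)_\infty}{\prod_{j}(q/(c_jw);q)_N\,(c_jw;q)_\infty}.
\]
The last quotient is bounded uniformly in $N$ and $w$. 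Hence the integrand on $|z|=|w||q|^{-N}$ is $O\big(K^N|q|^{(C-A)\binom{N+1}{2}-N\deg p}\big)$ for a constant $K$. This tends to $0$ when $C>A$, and the residue series converge for the same reason.

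The case $C=A$ is left unproved, and it cannot be proved as stated. There the same bound is only $O\big((|a_1\cdots a_A/(c_1\cdots c_C)|\,|q|^{-\deg p})^N\big)$, and the terms $\Phi_{s,n}p(q^{-n}/c_s)$ have the same size. So one needs $|a_1\cdots a_A/(c_1\cdots c_C)|<|q|^{\deg p}$. Your first reading (strengthen the hypothesis) is the right one. The alternative, ``one absorbs $p$ by noting the precise exponents match,'' does not work, because the statement is false as written for nonconstant $p$.

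For instance, take $A=B=C=1$, $D=0$, $p(z)=z$ and $|q|<|a_1/c_1|<1$. The left side is the finite coefficient $[z^{-1}]I(z)$. The right side is $K_1c_1^{-1}\sum_{n\geq0}\frac{(qc_1/a_1;q)_n}{(q,b_1c_1;q)_n}\big(\frac{a_1}{c_1q}\big)^n$, whose terms do not tend to $0$ for generic parameters, so it diverges.

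So your argument, once the estimate is corrected, proves the lemma for $C>A$. That is the only case the paper uses, with $A=4$ and $C=5$. It also proves the case $C=A$ under the condition $|a_1\cdots a_A/(c_1\cdots c_C)|<|q|^{\deg p}$, and that is the most that is true.
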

This lemma follows immediately from \cite[(4.10.6)]{GR-book}.

For any positive integer $m$, we recall Atkin's $U$-operator defined by
\begin{align}
\sum_{n=-\infty}^\infty a(n)q^n \mid U_m:=\sum_{n=-\infty}^\infty a(mn)q^n.
\end{align}

After replacing $q$ by $-q$, the identity in \cite[Entry 25 (vi)]{Notebook3} can be rewritten as
\begin{align}\label{phi-square}
\frac{J_1^4}{J_2^4}=\frac{J_4^{10}}{J_2^6J_8^4}-4q\frac{J_8^4}{J_2^2J_4^2}.    
\end{align}
From \cite[(3.1)]{Toh} we find
\begin{align}\label{eq-Toh}
\frac{J_2^3}{J_1^3}&=\left(\frac{J_6}{J_3}+12q^3\frac{J_6^2J_{18}^5}{J_3^6J_9}\right)
+3q\frac{J_6^4J_9^5}{J_3^8J_{18}}+6q^2\frac{J_6^3J_9^2J_{18}^2}{J_3^7}.
\end{align}
It follows that
\begin{align}
q^{-1}(q;q^2)_\infty^4 |U_2&=-4\frac{J_4^{4}}{J_1^2J_2^2}, \label{J2J1-U2} \\
q^{-1}(-q;q)_\infty^3|U_3&=3\frac{J_2^4J_3^5}{J_1^8J_6}. \label{J2J1-U3}
\end{align}

For the purpose of evaluating some cubic series, we recall some quadratic, cubic and sextic summation formulas in the literature. We need two quadratic summation formulas from Rahman's work \cite[(1.8) and (4.6)]{Rahman1993}:
\begin{align}
&\sum_{n=0}^{\infty}\frac{(a;q^2)_n(1-aq^{3n})(b,c,aq/(bc);q)_n}{(q;q)_n(1-a)(aq^2/b,aq^2/c,bcq;q^2)_n}q^{n(n+1)/2}
 =\frac{(aq^2,bq,cq,aq^2/(bc);q^2)_\infty}{(q,aq^2/b,aq^2/c,bcq;q^2)_\infty},  \label{eq-Rahman18} \\
 &\sum_{n=0}^{\infty}\frac{(a;q)_n(1-aq^{3n})(d,q/d;q)_n(b;q^2)_n}{(q^2;q^2)_n(1-a)(aq^2/d,adq;q^2)_n(aq/b;q)_n}
 \left(\frac{aq}{b}\right)^n q^{\binom{n}{2}}  \nonumber \\
 &=\frac{(aq,aq^2,adq/b,aq^2/(bd);q^2)_\infty}{(aq/b,aq^2/b,aq^2/d,adq;q^2)_\infty}. \label{eq-Rahman46}
\end{align}

Recall the following cubic summation formula (see \cite[(1.7)]{Chern} and \cite[(1.29)]{Wang2023}):
\begin{align}\label{id-cubic-C}
\sum_{n=0}^\infty \frac{(a;q)_n(a^{-1}q^2;q^2)_n}{(a^2q;q^2)_n(q^3;q^3)_n}(-1)^na^nq^{n(n+1)/2}=\frac{(aq;q^2)_\infty (a^3q^3;q^6)_\infty}{(a^2q;q^2)_\infty (q^3;q^6)_\infty}.
\end{align}
As pointed out by the author \cite[(3.21)]{Wang2023}, this formula is a special case of Rahman's sextic summation formula \cite{Rahman1989} (see also \cite[Exercise 3.29(i)]{GR-book}.

We will also use the following formula of Chu and Wang \cite[Corollary 9]{Chu-Wang}
\begin{align}\label{eq-Chu}
\sum_{k=0}^\infty \frac{(q^{\frac{1}{2}};q)_k(q^\frac{3}{2}a;q^2)_k}{(q^2;q^2)_k(q^3a;q^3)_k}(-1)^kq^{(k^2+2k)/2}
=\frac{(q^{\frac{3}{2}}a;q^2)_\infty (q^{\frac{3}{2}};q^3)_\infty (q^6;q^6)_\infty}{(q^2;q^2)_\infty (q^3a;q^3)_\infty (q^{\frac{3}{2}}a;q^6)_\infty}.
\end{align}


\section{Proofs of the triple sum identities}\label{sec-triple}
In this section, we shall present a proof for Theorem \ref{thm-I56}. The approach is motivated by the work of Bringmann et al.~\cite{BJSM} especially their proofs of \eqref{I5}--\eqref{I6} given in \cite[Sections 4.9--4.10]{BJSM}. Note that the situation here is slightly different as we need to deal with three triple sums together while the identities proved in \cite{BJSM}  involve only one triple sum. 

We define
\begin{align*}
 G(u,v,w):=\sum_{i,j,k\geq 0}\frac{q^{\binom{i+2j+3k}{2}+j^2}u^iv^jw^k}{(q;q)_i(q^2;q^2)_j(q^3;q^3)_k}.
\end{align*}
We denote the left side of  \eqref{I5a} and \eqref{I6a} by $S_1(q)$ and $S_2(q)$, respectively. Then we have
\begin{align}
S_1(q)&=G(q,q^2,q^2)+q^2\big(G(q^3,q^6,q^8)-q^3G(q^4,q^8,q^{11})\big), \label{S1-G-start} \\
S_2(q)&=(1+q)G(q^3,q^4,q^7)
   +q^2G(q^3,q^6,q^{10}). \label{S2-G}
\end{align}
We have
\begin{align}\label{G-diff}
   &G(u,v,w)-G(uq,v,w)= \sum_{i,j,k\geq 0}\frac{q^{\binom{i+2j+3k}{2}+j^2}u^i(1-q^i)v^jw^k}{(q;q)_i(q^2;q^2)_j(q^3;q^3)_k} \nonumber \\
&=\sum_{i,j,k\geq 0}\frac{q^{\binom{i+2j+3k+1}{2}+j^2}u^{i+1}v^jw^k}{(q;q)_i(q^2;q^2)_j(q^3;q^3)_k}=uG(qu,q^2v,q^3w).
\end{align}
Here for the second equality we replaced $i$ by $i+1$ and used the following simple fact for simplification:
\begin{align}\label{binom-split}
    \binom{m+n}{2}=\binom{m}{2}+mn+\binom{n}{2}.
\end{align}
From \eqref{G-diff} with $(u,v,w)=(q^3,q^6,q^8)$ we can rewrite \eqref{S1-G-start} as
\begin{align}\label{S1-G}
S_1(q)=G(q,q^2,q^2)+q^2G(q^4,q^6,q^8).
\end{align}

Using \eqref{binom-split} and summing over $i$ first using \eqref{Euler2}, we have
\begin{align}
    G(u,v,w)=\sum_{j,k\geq 0}\frac{(-q^{2j+3k}u;q)_\infty q^{\binom{2j+3k}{2}+j^2}v^jw^k}{(q^2;q^2)_j(q^3;q^3)_k}.
\end{align}
Hence for nonnegative integers $a,b,c$, we have
\begin{align}\label{G-simplify}
    G(q^a,q^b,q^c)=(-q;q)_\infty \sum_{j,k\geq 0} \frac{q^{\binom{2j+3k}{2}+j^2+bj+ck}}{(-q;q)_{2j+3k+a-1}(q^2;q^2)_j(q^3;q^3)_k}.
\end{align}

We shall evaluate $S_1(q)$ and $S_2(q)$ through \eqref{S1-G} and \eqref{S2-G} by recurrences.  

\subsection{Proof of the $I_{5a}$ identity}
From \eqref{S1-G} and \eqref{G-simplify} we have
\begin{align}\label{S1-start}
    S_1(q)&=(-q;q)_\infty \Big(\sum_{j,k\geq 0} \frac{q^{\binom{2j+3k}{2}+j^2+2j+2k}}{(-q;q)_{2j+3k}(q^2;q^2)_j(q^3;q^3)_k} \nonumber \\
    &\quad +q^2\sum_{j,k\geq 0} \frac{q^{\binom{2j+3k}{2}+j^2+6j+8k}}{(-q;q)_{2j+3k+3}(q^2;q^2)_j(q^3;q^3)_k}  \Big) \nonumber \\
    &=(-q;q)_\infty \sum_{j,k\geq 0} \frac{(1+q^{-2j-3k}(1-q^{3k}))q^{\binom{2j+3k}{2}+j^2+2j+2k}}{(q;q)_{2j+3k}(q^2;q^2)_j(q^3;q^3)_k}.
\end{align}
Here for the last equality we replaced $k$ by $k-1$ in the second sum and combined the two sums together.

We define
\begin{equation}\label{eq-def-A}
A(x):=\sum_{j,k\geq 0}\frac{\big(1+q^{-2j-3k}(1-q^{3k})\big)q^{\binom{2j+3k}{2}+j^2+2j+2k}x^{2j+2k}}{(-q;q)_{2j+3k}(q^2;q^2)_j(q^3;q^3)_k}.
\end{equation}
Notice in particular that $A(0)=1$. Then \eqref{S1-start} can be written as
\begin{equation}\label{eq-S-A}
S_1(q)=(-q;q)_\infty A(1).
\end{equation}

We write $M=j+k$. Then $j=M-k$, and a direct calculation gives
\begin{equation}\label{eq-quadratic-A}
 \binom{2j+3k}{2}+j^2+2j+2k
 =3M^2+M+\frac{k(3k-1)}{2}.
\end{equation}
Consequently, if we denote
\begin{equation}\label{eq-def-h}
 h_{k,M}:=\frac{\big(1+q^{-2M-k}(1-q^{3k})\big)q^{k(3k-1)/2}}{(-q;q)_{2M+k}(q^2;q^2)_{M-k}(q^3;q^3)_k}, \quad  a_M=\sum_{k=0}^M h_{k,M},
\end{equation}
then \eqref{eq-def-A} can be written as
\begin{equation}\label{eq-A-coefficients}
 A(x)=\sum_{M\geq 0} a_Mq^{3M^2+M}x^{2M}.
\end{equation}

\begin{lemma}\label{lem-recurrence}
With $a_{-1}:=0$, the coefficients in \eqref{eq-A-coefficients} satisfy, for $M\geq 1$, 
\begin{equation}\label{eq-a-recurrence}
 \begin{split}
 (1-q^{6M})(1-q^{6M-4})a_M=(q^{-2}+q^2-q^{6M-7}-q^{6M-3})a_{M-1}-a_{M-2}.
 \end{split}
\end{equation}
\end{lemma}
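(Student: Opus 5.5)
The recurrence \eqref{eq-a-recurrence} is a statement about the finite sums $a_M=\sum_{k=0}^M h_{k,M}$. I would prove it by the method of creative telescoping, as in the proofs of \cite[Sections 4.9--4.10]{BJSM}.

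\textbf{Step 1: reduce to a telescoping identity.} Extend $h_{k,M}$ by zero outside $0\le k\le M$; this is automatic since $1/(q^2;q^2)_{M-k}=0$ for $k>M$ and $1/(q^3;q^3)_k=0$ for $k<0$. Then it suffices to find a rational function $R(k,M)$ in $q^k,q^M$ such that
\begin{align*}
(1-q^{6M})(1-q^{6M-4})h_{k,M}-(q^{-2}+q^2-q^{6M-7}-q^{6M-3})h_{k,M-1}+h_{k,M-2}=g_{k+1,M}-g_{k,M},
\end{align*}
where $g_{k,M}=R(k,M)h_{k,M}$ and $g_{k,M}$ vanishes for $k\le 0$ and for $k>M+1$. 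Summing over $k$ then gives \eqref{eq-a-recurrence}. The factor $1+q^{-2M-k}(1-q^{3k})$ is not hypergeometric in $k$, so I would split $h_{k,M}=h^{(1)}_{k,M}+h^{(2)}_{k,M}$. Here $h^{(1)}$ carries the $1$, and $h^{(2)}$ carries $q^{-2M-k}(1-q^{3k})$, which after shifting $k\to k+1$ becomes the $(q^3;q^3)_{k}$-type term coming from the second sum in \eqref{S1-start}. Each piece is $q$-hypergeometric in both $k$ and $M$.

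\textbf{Step 2: compute the ratios and find the certificate.} Write all shifted terms as $h^{(i)}_{k,M}$ times rational functions of $x=q^k$ and $y=q^M$. For example,
\begin{align*}
\frac{h^{(1)}_{k,M-1}}{h^{(1)}_{k,M}}=\frac{(1+q^{2M+k-1})(1+q^{2M+k})(1-q^{2M-2k})}{1}.
\end{align*}
The other ratios follow similarly from $(-q;q)_{2M+k}$, $(q^2;q^2)_{M-k}$ and $(q^3;q^3)_k$. I would then make a polynomial ansatz for the certificate in $x$ and $y$, for instance
\begin{align*}
g_{k,M}=\bigl(\text{polynomial in }q^k,q^M\bigr)\frac{(1-q^{3k})\,h\text{-type term}}{\ldots}.
\end{align*}
The natural denominator to clear is $(1-q^{2M-2k+2})(1-q^{2M-2k+4})$, coming from the $M-2$ shift. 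Equating coefficients reduces the problem to a finite linear system. In practice I would run a $q$-Zeilberger-type computation, such as qMultiSum or the $q$-Zeilberger algorithm applied to the combined sum, to obtain $R$. The proof then only requires verifying a rational-function identity after dividing by $h_{k,M}$.

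\textbf{Step 3: boundary checks.} Confirm that $g_{0,M}=0$, for example because it contains the factor $1-q^{3k}$ or $1/(q^3;q^3)_{k-1}$. Confirm also that $g_{M+1,M}=0$, from $1/(q^2;q^2)_{M-k}$ with the shifted index. For $M=1$, use $a_{-1}=0$ and check directly against $a_0=h_{0,0}=1$ and $a_1$.

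\textbf{Main obstacle.} I expect the main difficulty to be finding the certificate $R(k,M)$. The two-piece structure of $h_{k,M}$ may force separate certificates for $h^{(1)}$ and $h^{(2)}$, whose sum telescopes only in combination. If a single ansatz fails, I would first try to derive separate inhomogeneous recurrences for $\sum_k h^{(1)}$ and $\sum_k h^{(2)}$ and then eliminate between them.
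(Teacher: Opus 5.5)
This is essentially the paper's proof. The paper also uses creative telescoping in $k$ with a certificate $g_{k,M}=R(q^k,q^M)h_{k,M}$, boundary terms that vanish, and a direct check at $M=1$; it simply writes down explicitly the certificate $R=(1-t^3)Q(t,s)/D(t,s)$ that the $q$-Zeilberger algorithm produces, which you leave to be computed.

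Your worry about splitting $h_{k,M}$ is unfounded. The factor $C(t,s)=1+(1-t^3)/(s^2t)$ contributes only the rational factors $C(qt,s)/C(t,s)$ and $C(t,qs)/C(t,s)$ to the shift ratios. So $h_{k,M}$ is itself $q$-hypergeometric, and a single certificate for the whole summand works; the paper just includes $C(t,s)$ as a factor of $D$.
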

Both this lemma and Lemma \ref{lem-b-recurrence} below can be proved using the $q$-Zeilberger algorithm implemented in the MAPLE package {\texttt{QDifferenceEquations}}. For the sake of completeness, we give proofs including explicit telescoping certificates for these lemmas. 
\begin{proof}
We introduce independent variables $s,t$ and define
\begin{align}
 C(t,s)&:=1+\frac{1-t^3}{s^2t}=\frac{s^2t+1-t^3}{s^2t}, \\
 D(t,s)&:=C(t,s)s^2t(1+qs^2t)(1+q^2s^2t)(1+q^3s^2t)(t^2-q^2s^2)(q^4s^2-t^2).
\end{align}
We define the polynomial
\begin{align*}
Q(t,s)&:=q^6s^4+q^7s^6(1+q+q^2)t+s^2(1+q)(q^9s^6-q+1)t^2 \\
 & \quad -q^4s^4(q^{10}s^6-q^8s^6+2q+1)t^3-q^4s^6(1+q+q^2+q^3+q^4)t^4 \\
 &\quad -q^{-1}s^2(q^9s^6-q^2+1)t^5+q^3s^4(1+q)t^6+q^6s^6t^7
\end{align*}
and let
\begin{equation}\label{eq-Rg-defn}
 R(t,s):=\frac{(1-t^3)Q(t,s)}{D(t,s)},  \quad g_{k,M}:=R(q^k,q^M)h_{k,M}.
\end{equation}

With $s=q^M$ and $t=q^k$, from \eqref{eq-def-h} we have
\begin{equation}\label{eq-ratio-k-A}
\frac{h_{k+1,M}}{h_{k,M}}=qt^3\frac{C(qt,s)}{C(t,s)}\frac{1-s^2/t^2}{(1+qs^2t)(1-q^3t^3)}
\end{equation}
and
\begin{equation}\label{eq-ratio-M-A}
 \frac{h_{k,M+1}}{h_{k,M}}=\frac{C(t,qs)}{C(t,s)}\frac{1}{(1+qs^2t)(1+q^2s^2t)(1-q^2s^2/t^2)}.
\end{equation}

From \eqref{eq-Rg-defn}--\eqref{eq-ratio-M-A} we deduce that
\begin{align}
g_{k+1,M}-g_{k,M} &=(1-q^{6M+12})(1-q^{6M+8})h_{k,M+2} \nonumber \\
 & \quad -(q^{-2}+q^2-q^{6M+5}-q^{6M+9})h_{k,M+1} +h_{k,M}.   \label{eq-telescoping}                  
\end{align}
In fact, after dividing by $h_{k,M}$, both sides can be expressed as rational functions of $s,t$, and their difference is easily checked to be identically zero by Maple.

Note that $g_{0,M}=0$. Due to the convention $1/(q^2;q^2)_r=0$ for negative integers $r$,  we have $h_{k,M}=0$ for
$k>M$.  Hence $g_{M+3,M}=0$.  Summing
\eqref{eq-telescoping} for $0\leq k\leq M+2$ therefore gives
\begin{align*}
 0=(1-q^{6M+12})(1-q^{6M+8})a_{M+2}-(q^{-2}+q^2-q^{6M+5}-q^{6M+9})a_{M+1}+a_M.
\end{align*}
This proves \eqref{eq-a-recurrence} for $M\geq2$.  Finally,
\eqref{eq-def-h} gives
\begin{align*}
 a_0=1, \quad a_1=\frac{q^{-2}(1+q^4)}{(1+q)(1-q^6)}.
\end{align*}
This proves the remaining case $M=1$ by direct substitution.
\end{proof}

\begin{proof}[Proof of \eqref{I5a}]
We claim that the function $A$ defined by \eqref{eq-def-A} satisfies
\begin{equation}\label{eq-A-rec}
 A(x)=(1+q^{-4}+x^2q^2+x^2q^6)A(xq^3)-q^{-4}(1+x^2q^7)(1+x^2q^{11})A(xq^6).
\end{equation}
In fact, inserting \eqref{eq-A-coefficients} into \eqref{eq-A-rec} and comparing the coefficient of $x^{2M}$,  after division by $q^{3M^2+M}$, the result is precisely \eqref{eq-a-recurrence}.  

We now solve \eqref{eq-A-rec} directly.  Let
\begin{equation}\label{eq-def-reduce-A}
 A(x)=(-x^2q^5;q^6)_\infty \mathcal{A}(x),
 \quad \mathcal{A}(x)=\sum_{n\geq 0}\alpha_nx^n.
\end{equation}
The relation \eqref{eq-A-rec} becomes
\begin{equation}\label{eq-reduce-A-rec}
 (1+x^2q^5)\mathcal{A}(x)
 =(1+q^{-4}+x^2q^2+x^2q^6)\mathcal{A}(xq^3)-q^{-4}(1+x^2q^7)\mathcal{A}(xq^6).
\end{equation}
Coefficient comparison in \eqref{eq-reduce-A-rec} gives
\begin{equation}\label{eq-alpha-recurrence}
 (1-q^{3n})(1-q^{3n-4})\alpha_n=-q^5(1-q^{3n-9})(1-q^{3n-5})\alpha_{n-2}.
\end{equation}
Since $\alpha_0=A(0)=1$, we have
\begin{align*}
 \alpha_{2m}=\frac{(q^{-3},q;q^6)_m}{(q^2,q^6;q^6)_m}(-q^5)^m.
\end{align*}
Since $A(x)$ is even, we have $\alpha_{2m+1}=0$. Hence
\begin{equation}\label{eq-A-2phi1}
 A(x)=(-x^2q^5;q^6)_\infty{}_2\phi_1\!\left[\begin{matrix}q^{-3},\ q\\ q^2\end{matrix};q^6,-x^2q^5
 \right].
\end{equation}

Replacing $q$ by $q^6$ in \eqref{Bailey-Daum} and setting $a=q^{-3}$ and $b=q$, we obtain
\begin{align}\label{2phi1-special-1}
    {}_2\phi_1\!\left[\begin{matrix}q^{-3},\ q\\ q^2\end{matrix};q^6,-q^5
 \right]=\frac{(q^3,q^7,q^{12};q^{12})_\infty}{(q^2,-q^5,q^6;q^6)_\infty}.
\end{align}
Combining \eqref{2phi1-special-1}, \eqref{eq-A-2phi1} and \eqref{eq-S-A}, we have
\begin{align*}
 S_1(q)=(-q;q)_\infty\frac{(q^3,q^7,q^{12};q^{12})_\infty}{(q^2,q^6;q^6)_\infty}
 =\frac{1}{(q^2;q^3)_\infty(q,q^6,q^9;q^{12})_\infty}.
\end{align*}
This proves the identity \eqref{I5a}.
\end{proof}

\subsection{Proof of the $I_{6a}$ identity}

From \eqref{S2-G} and \eqref{G-simplify} we have
\begin{equation}\label{eq-S2-B}
 S_2(q)=(-q^3;q)_\infty B(1),
\end{equation}
where
\begin{equation}\label{eq-def-B}
 B(x):=\sum_{j,k\geq 0}
 \frac{(1+q+q^{2j+3k+2})
 q^{\binom{2j+3k}{2}+j^2+4j+7k}x^{2j+2k}}
 {(-q^3;q)_{2j+3k}(q^2;q^2)_j(q^3;q^3)_k}.
\end{equation}
We write $M=j+k$. Then $j=M-k$, and a direct calculation from \eqref{eq-quadratic-A} gives
\begin{equation}\label{eq-quadratic-B}
 \binom{2j+3k}{2}+j^2+4j+7k =3M^2+3M+\frac{k(3k+5)}2.
\end{equation}
Consequently, if we denote
\begin{equation}\label{eq-def-bM}
 b_M:=\sum_{k=0}^{M}\widehat{h}_{k,M},\quad
\widehat{h}_{k,M}:=\frac{(1+q+q^{2M+k+2})q^{k(3k+5)/2}}{(-q^3;q)_{2M+k}(q^2;q^2)_{M-k}(q^3;q^3)_k},
\end{equation}
then
\begin{equation}\label{eq-B-coefficients}
B(x)=\sum_{M\geq 0}b_Mq^{3M^2+3M}x^{2M}.
\end{equation}

\begin{lemma}\label{lem-b-recurrence}
With $b_{-1}:=0$, the coefficients in \eqref{eq-def-bM} satisfy, for $M\geq 1$,
\begin{align}\label{eq-b-recurrence}
 (1-q^{6M})(1-q^{6M+4})b_M=(1+q^4-q^{6M-1}-q^{6M+3})b_{M-1}-q^4b_{M-2}.
\end{align}
\end{lemma}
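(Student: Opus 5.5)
We will mirror the proof of Lemma \ref{lem-recurrence}: a creative-telescoping (certificate) argument in the inner variable $k$, followed by a check of the initial case. Set $s=q^M$ and $t=q^k$, and write
\[
\widehat{C}(t,s):=1+q+q^2s^2t ,
\]
so that the numerator factor of $\widehat{h}_{k,M}$ is $\widehat{C}(q^k,q^M)$. From \eqref{eq-def-bM} we first compute the two shift ratios as rational functions of $s,t$:
\begin{align*}
\frac{\widehat{h}_{k+1,M}}{\widehat{h}_{k,M}}&=q^{3k+4}\,\frac{\widehat{C}(qt,s)}{\widehat{C}(t,s)}\,\frac{1-s^2/t^2}{(1+q^3s^2t)(1-q^3t^3)},\\
\frac{\widehat{h}_{k,M+1}}{\widehat{h}_{k,M}}&=\frac{\widehat{C}(t,qs)}{\widehat{C}(t,s)}\,\frac{1}{(1+q^3s^2t)(1+q^4s^2t)(1-q^2s^2/t^2)}.
\end{align*}
Here $(-q^3;q)_{2M+k}$ gains the factors $1+q^{2M+k+3}$, and $(q^2;q^2)_{M-k}$ shifts accordingly. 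The powers of $q$ and the exact shifts will be double-checked, but the structure is the same as in \eqref{eq-ratio-k-A}--\eqref{eq-ratio-M-A}.

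Next we seek a rational certificate $\widehat{R}(t,s)$ and put $\widehat{g}_{k,M}=\widehat{R}(q^k,q^M)\widehat{h}_{k,M}$. The goal is the telescoping identity
\[
\widehat{g}_{k+1,M}-\widehat{g}_{k,M}=(1-q^{6M+12})(1-q^{6M+16})\widehat{h}_{k,M+2}-(1+q^4-q^{6M+11}-q^{6M+15})\widehat{h}_{k,M+1}+q^4\widehat{h}_{k,M}.
\]
This is the recurrence \eqref{eq-b-recurrence} with $M$ replaced by $M+2$. Dividing by $\widehat{h}_{k,M}$ turns it into a first-order rational difference equation for $\widehat{R}$ in $t$.

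We will obtain $\widehat{R}$ from the $q$-Zeilberger algorithm (QDifferenceEquations), with the ansatz
\[
\widehat{R}=\frac{(1-t^3)\,\widehat{Q}(t,s)}{\widehat{D}(t,s)},\qquad
\widehat{D}=\widehat{C}(t,s)\,\cdot(\text{the }(1+q^js^2t)\text{ factors})\cdot(t^2-q^2s^2)(q^4s^2-t^2),
\]
in analogy with $D$. The factor $1-t^3$ forces $\widehat{g}_{0,M}=0$. Verifying that the two sides agree as rational functions is then a mechanical computer-algebra check. Finding the explicit polynomial $\widehat{Q}$ is the main obstacle, and we would obtain it by solving for its coefficients in the ansatz above.

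With the certificate in hand, the summation step is routine. Since $\widehat{h}_{k,M}=0$ for $k>M$, we have $\widehat{g}_{k,M}=0$ for $k\ge M+3$, provided $\widehat{R}$ has no pole at those points (this must be checked). Summing the telescoping identity over $0\le k\le M+2$ gives \eqref{eq-b-recurrence} for all $M\ge 2$.

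For $M=1$, with $b_{-1}=0$, we compute directly:
\[
b_0=\widehat{h}_{0,0}=1+q+q^2,\qquad
b_1=\frac{1+q+q^4}{(1+q^3)(1+q^4)(1-q^2)}+\frac{(1+q+q^5)q^4}{(1+q^3)(1+q^4)(1+q^5)(1-q^3)}.
\]
We then confirm that $(1-q^6)(1-q^{10})b_1=(1+q^4-q^5-q^9)b_0$ by clearing denominators.
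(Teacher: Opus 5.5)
Your plan is the paper's proof step for step. It uses the same $\widehat C(t,s)=1+q+q^2s^2t$ and the same two shift ratios (your $q^{3k+4}(1-s^2/t^2)$ is the paper's $q^4t(t^2-s^2)$). The target relation for $\widehat g_{k+1,M}-\widehat g_{k,M}$, the ansatz shape $\widehat R=(1-t^3)\widehat Q/\widehat D$, and the summation over $0\le k\le M+2$ also match. What is missing is the certificate itself, and that is the whole content of the lemma. The paper takes $\widehat D=\widehat C(t,s)(1+q^3s^2t)(1+q^4s^2t)(1+q^5s^2t)(t^2-q^2s^2)(q^4s^2-t^2)$ and writes down an explicit $\widehat Q$ of degree $5$ in $t$, after which the rational identity is a mechanical check. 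Until you exhibit such a $\widehat Q$, or quote the $q$-Zeilberger output, you have a correct plan rather than a proof. Nothing in the plan would fail, since a certificate of exactly your shape exists.

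Two further points. First, your separate treatment of $M=1$ improves on the paper. The paper's telescoping sum only gives the recurrence for $M\ge 2$ before it ``replaces $M+2$ by $M$'', yet $M=1$ is what the $x^2$-coefficient of the $q$-difference equation for $B(x)$ needs. Your $b_0$ and $b_1$ are correct, and the check goes through: both sides equal $(1+q^4)(1-q^5)(1+q+q^2)$.

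Second, your pole caveat looks only at $k\ge M+3$. The factor $(t^2-q^2s^2)(q^4s^2-t^2)$ also gives $\widehat R(q^k,q^M)$ poles at $k=M+1$ and $k=M+2$, inside the summation range and exactly where $\widehat h_{k,M}=0$. These poles are essential. At $k=M+2$, using $\widehat g_{M+3,M}=0$, the relation reads $-\widehat g_{M+2,M}=(1-q^{6M+12})(1-q^{6M+16})\widehat h_{M+2,M+2}\neq 0$. So $\widehat g_{M+2,M}$ is not $0$ and must be understood as the limit of a $0\cdot\infty$ expression; the same holds for $\widehat g_{M+1,M}$. Hence ``divide by $\widehat h_{k,M}$ and check a rational identity'' does not by itself justify the relation at these $k$.

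The clean fix is to regard every factor of $\widehat h_{k,M}$ as a function of $s=q^M$. In particular, write $1/(q^2;q^2)_{M-k}=(q^2s^2/t^2;q^2)_\infty/(q^2;q^2)_\infty$. Then the relation holds for generic $s$, and you let $s\to q^M$; the simple zeros at $t=qs$ and $t=q^2s$ cancel the simple poles of $\widehat R$. The paper is equally silent on this point.
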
 
\begin{proof}
We introduce independent variables $s,t$ and define
\begin{align*}
 \widehat{C}(t,s)&:=1+q+q^2s^2t, \\
 \widehat{D}(t,s)&:=\widehat{C}(t,s)(1+q^3s^2t)(1+q^4s^2t)(1+q^5s^2t)(t^2-q^2s^2)(q^4s^2-t^2).
\end{align*}
We define the polynomial
\begin{align*}
&  \widehat{Q}(t,s):=q^{10}s^4(1+q)+q^{12}s^6(1+q+q^2+q^3+q^4)t\\
 &\quad +\left[-q^4(q-1)(q+1)^2s^2+q^{17}(1+q)s^8\right]t^2
 -\left[q^{10}(1+q)^2s^4+q^{22}(q^2-1)s^{10}\right]t^3\\
 &\quad -q^{13}s^6(1+q)(1+q^2)t^4-q^{18}s^8t^5
\end{align*}
and let
\begin{equation}\label{eq-wR-wg-defn}
 \widehat{R}(t,s):=\frac{(1-t^3) \widehat{Q}(t,s)}{\widehat{D}(t,s)},
 \quad
 \widehat{g}_{k,M}:=\widehat{R}(q^k,q^M)\widehat{h}_{k,M}.
\end{equation}
With $s=q^M$ and $t=q^k$, from \eqref{eq-def-bM} we have
\begin{equation}\label{eq-ratio-k-B}
 \frac{\widehat{h}_{k+1,M}}{\widehat{h}_{k,M}}
 =\frac{q^4t(t^2-s^2)\widehat{C}(qt,s)}
 {\widehat{C}(t,s)(1+q^3s^2t)(1-q^3t^3)},
\end{equation}
and
\begin{equation}\label{eq-ratio-M-B}
 \frac{\widehat{h}_{k,M+1}}{\widehat{h}_{k,M}}
 =\frac{\widehat{C}(t,qs)}
 {\widehat{C}(t,s)(1+q^3s^2t)(1+q^4s^2t)(1-q^2s^2/t^2)}.
\end{equation}
From \eqref{eq-wR-wg-defn}--\eqref{eq-ratio-M-B}, we obtain the identity
\begin{align}
 \widehat{g}_{k+1,M}-\widehat{g}_{k,M}
 &=(1-q^{6M+12})(1-q^{6M+16})\widehat{h}_{k,M+2}\notag\\
 &-(1+q^4-q^{6M+11}-q^{6M+15})\widehat{h}_{k,M+1}
   +q^4\widehat{h}_{k,M}.  \label{eq-wh-telescoping}
\end{align}
In fact, after dividing by $\widehat{h}_{k,M}$, both sides can be expressed as rational functions of $s,t$, and their difference is easily checked to be identically zero by Maple.

Note that $\widehat{g}_{0,M}=0$. It is easy to see that $\widehat{h}_{k,M}=0$ for $k>M$. Hence $\widehat{g}_{M+3,M}=0$.  Summing \eqref{eq-wh-telescoping} over $0\leq k\leq M+2$ gives
\begin{align*}
 0=(1-q^{6M+12})(1-q^{6M+16})b_{M+2}-(1+q^4-q^{6M+11}-q^{6M+15})b_{M+1}+q^4b_M.
\end{align*}
Replacing $M+2$ by $M$, we obtain \eqref{eq-b-recurrence}.
\end{proof}

\begin{proof}[Proof of \eqref{I6a}]
We claim that the function $B$ satisfies 
\begin{align}\label{eq-B-rec}
 B(x)=(1+q^4+x^2q^6+x^2q^{10})B(xq^3)-q^4(1+x^2q^7)(1+x^2q^{11})B(xq^6).
\end{align}
In fact, the initial value follows from \eqref{eq-def-bM}.  Upon inserting \eqref{eq-B-coefficients} into \eqref{eq-B-rec}, the coefficient of $x^{2M}$ is precisely \eqref{eq-b-recurrence}.  
Let
\begin{align}\label{eq-def-reduce-B}
 B(x)=(-x^2q^5;q^6)_\infty \mathcal{B}(x),
 \quad \mathcal{B}(x)=\sum_{n\geq 0}\beta_nx^n.
\end{align}
Then \eqref{eq-B-rec} becomes
\begin{align}\label{eq-reduce-B-rec}
 (1+x^2q^5)\mathcal{B}(x)=(1+q^4+x^2q^6+x^2q^{10})\mathcal{B}(xq^3)-q^4(1+x^2q^7)\mathcal{B}(xq^6).
\end{align}
Equating coefficients in \eqref{eq-reduce-B-rec} gives
\begin{align}\label{eq-beta-recurrence}
 (1-q^{3n})(1-q^{3n+4})\beta_n=-q^5(1-q^{3n-5})(1-q^{3n-1})\beta_{n-2}.
\end{align}
Note that $\beta_0=B(0)=1+q+q^2$, and \eqref{eq-beta-recurrence}
therefore yields
\begin{equation}\label{eq-beta-even}
 \beta_{2m}=(1+q+q^2)\frac{(q,q^5;q^6)_m}{(q^6,q^{10};q^6)_m}(-q^5)^m.
\end{equation}
Since $B$ is even, we have $\beta_{2m+1}=0$. It follows from \eqref{eq-def-reduce-B} and \eqref{eq-beta-even} that
\begin{equation}\label{eq-B-2phi1}
 B(1)=(1+q+q^2)(-q^5;q^6)_\infty{}_2\phi_1\!\left[\begin{matrix}q,\ q^5\\ q^{10}\end{matrix};q^6,-q^5 \right].
\end{equation}

Replacing $q$ by $q^6$ and then setting $a=q^5$ and $b=q$ in
\eqref{Bailey-Daum}, we obtain
\begin{equation}\label{eq-2phi1-evaluation}
 {}_2\phi_1\!\left[\begin{matrix}q,\ q^5\\ q^{10}\end{matrix};q^6,-q^5
 \right]
 =\frac{(q^{11},q^{15},q^{12};q^{12})_\infty}
 {(q^{10},-q^5,q^6;q^6)_\infty}.
\end{equation}
Combining \eqref{eq-S2-B}, \eqref{eq-B-2phi1} and
\eqref{eq-2phi1-evaluation}, we have
\begin{align*}
 S_2(q)=(1+q+q^2)(-q^3;q)_\infty(-q^6;q^6)_\infty
 \frac{(q^{11},q^{15};q^{12})_\infty}{(q^{10};q^6)_\infty}
 =\frac{1}{(q;q^3)_\infty(q^5,q^6,q^9;q^{12})_\infty}.
\end{align*}
This proves the identity \eqref{I6a}.  
\end{proof}
\begin{rem}
The steps \eqref{eq-def-reduce-A}--\eqref{eq-A-2phi1} and \eqref{eq-def-reduce-B}--\eqref{eq-B-2phi1} also follow directly from \cite[Proposition 2.4]{BJSM}. We include the above steps for the sake of completeness.
\end{rem}

\section{Some cubic Rogers--Ramanujan type identities}\label{sec-cubic}
In this section, we establish two families of cubic Rogers--Ramanujan type identities which will play a key role in the proof of Theorem \ref{thm-I78}.
\begin{theorem}\label{thm-cubic-1}
We have
\begin{align} 
&\sum_{n=0}^\infty \frac{(q;q^2)_n(-q^2;q^4)_n}{(q^4;q^4)_n(-q^3;q^6)_n(1+q^{2n+1})}(-1)^nq^{n^2+2n}=\frac{J_{12}J_{1,12}J_{3,12}J_{10,24}}{J_{2,12}J_6^2J_8}, \label{cubic-id-1} \\
&\sum_{n=0}^\infty \frac{(q;q^2)_n(-q^2;q^4)_n}{(q^4;q^4)_n(-q^3;q^6)_{n+1}}(-1)^nq^{n^2+4n}=\frac{J_{12}J_{3,12}J_{5,12}J_{2,24}}{J_{2,12}J_6^2J_8}, \label{cubic-id-2}  \\
&\sum_{n=0}^\infty \frac{(-q;q)_n(-q^2;q^2)_n}{(q;q^2)_{n+1}(q^3;q^3)_{n+1}}q^{(n^2+5n)/2}=\frac{J_8J_{12}^5J_{2,24}}{J_4J_6J_{1,12}J_{2,12}J_{3,12}^2J_{5,12}}, \label{cubic-id-3} \\
&\sum_{n=1}^\infty \frac{(-q;q)_{n-1}(-q^2;q^2)_{n-1}}{(q^3;q^3)_{n-1}(q;q^2)_n (1-q^n)}q^{n(n+1)/2}=\frac14\left(
 \frac{J_2^3J_6^3}{J_1^2J_3^2J_4J_{12}}
 +\frac{2J_4^3J_6}{J_1J_2J_3J_{12}}-3\right).  \label{cubic-id-4}
\end{align}
\end{theorem}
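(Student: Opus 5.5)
The plan is to derive each of \eqref{cubic-id-1}--\eqref{cubic-id-4} from the quadratic and cubic summation formulas collected in Section \ref{sec-pre}. The strategy is to specialize a parameter so the summand matches, and then, where a single specialization does not suffice, dissect the resulting products using \eqref{phi-square}, \eqref{eq-Toh}, \eqref{J2J1-U2} and \eqref{J2J1-U3}.

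For \eqref{cubic-id-1} and \eqref{cubic-id-2}, I would rewrite the summand with base $q^2$:
\[
(q;q^2)_n(-q^2;q^4)_n=(q^{1/2};q)_n(-q;q^2)_n\big|_{q\to q^2}, \qquad (-q^3;q^6)_n=(q^{3/2}a;q^3)_n\big|_{q\to q^2,\ a=-1}.
\]
This matches the Chu--Wang formula \eqref{eq-Chu}, and also Rahman's \eqref{eq-Rahman46} with $b$ chosen to produce the base-$q^2$ factor.

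The extra factor $1/(1+q^{2n+1})$ in \eqref{cubic-id-1} and the shift $(-q^3;q^6)_{n+1}$ in \eqref{cubic-id-2} are handled as follows. I would insert a free parameter $a$ into \eqref{eq-Chu}, or equivalently into \eqref{id-cubic-C}, and use contiguous relations. Concretely, $1/(1+q^{2n+1})=(-q;q^2)_n/(-q;q^2)_{n+1}$ converts the sum into the $a$-deformed Chu--Wang sum at a shifted argument. Similarly, $1/(-q^3;q^6)_{n+1}$ corresponds to $(q^3a;q^3)_{n+1}$ with $a\mapsto aq^3$.

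Taking a suitable linear combination of \eqref{eq-Chu} at $a$ and at $aq^{\pm 2}$ should isolate each sum as a sum of two infinite products. Each two-term combination then needs to be collapsed into the single product on the right. For this I would use quintuple/Jacobi triple product manipulations \eqref{Jacobi}, converting to $J$-notation via \eqref{eq-J-eta}.

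For \eqref{cubic-id-3}, the summand $(-q;q)_n(-q^2;q^2)_n/((q;q^2)_{n+1}(q^3;q^3)_{n+1})$ resembles \eqref{id-cubic-C} after $q\to -q$, with $a$ chosen so that $(a;q)_n\to(-q;q)_n$ and $(a^{-1}q^2;q^2)_n\to(-q^2;q^2)_n$; note also that $(-q;q)_n(q;q)_n=(q^2;q^2)_n$. An alternative route is Rahman's quadratic formula \eqref{eq-Rahman18} with $a=q^2$ and $b,c$ specialized to $\pm$ square roots, which yields the denominators $(q;q^2)$ and, via $(b,c,aq/(bc))$, a cubic-type factor. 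The index shift $n+1$ would be absorbed by taking $a\to aq$ in \eqref{id-cubic-C}. If no direct specialization fits, I would express the sum as a limiting case of a nonterminating extension and evaluate it by the integral Lemma \ref{lem-integral} run in reverse: write the sum as the residue contribution of an infinite-product integrand and compute the integral's constant term by \eqref{Jacobi}.

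The identity \eqref{cubic-id-4} is the main obstacle. Its right side is not a single product, and the factor $1/(1-q^n)$ signals a derivative in a parameter. I would introduce $z$ so that the sum equals $\frac{d}{dz}$, at $z=1$, of a summable series of the type \eqref{id-cubic-C} or \eqref{eq-Chu}, or equivalently write $1/(1-q^n)=\sum_{m\ge0}q^{nm}$ and interchange the sums. Differentiating the product side produces logarithmic-derivative Lambert series. These must then be identified with the combination
\[
\frac{J_2^3J_6^3}{J_1^2J_3^2J_4J_{12}}+\frac{2J_4^3J_6}{J_1J_2J_3J_{12}}-3.
\]
That identification I would carry out by 2- and 3-dissections using \eqref{phi-square}, \eqref{eq-Toh}, \eqref{J2J1-U2} and \eqref{J2J1-U3}, finishing with a modular-forms check (Sturm bound) of the resulting eta-quotient identity.
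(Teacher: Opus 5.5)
There is a genuine gap: the plan never identifies what makes these series summable, and the mechanisms you propose in its place do not work. In the paper, each series is a linear combination of two summable series. One is handled by a cubic summation (\eqref{eq-Chu} or \eqref{id-cubic-C}), the other by one of Rahman's well-poised quadratic summations at a root of unity.

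For \eqref{cubic-id-1}--\eqref{cubic-id-2}, write $S_1,S_2$ for the left sides. Contiguity in the single parameter $a$ of \eqref{eq-Chu} cannot isolate them. The parameter $a$ sits in both a base-$q^2$ and a base-$q^3$ symbol, so $a\mapsto aq^{\pm2}$ does not even change the summand by a rational function of $q^n$. Your match for $(-q^3;q^6)_n$ also uses a factor $(q^{3/2}a;q^3)_n$ that does not occur in \eqref{eq-Chu}, whose denominator is $(q^3a;q^3)_k$. Matching $(-q^2;q^4)_n$ forces $a=-q^{-1}$ and yields $(-q^5;q^6)_n$ instead.

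What works is to put both summands over $(-q^3;q^6)_{n+1}$ and form $S_1\pm qS_2$. Since $\frac{1+q^{6n+3}}{1+q^{2n+1}}\pm q^{2n+1}$ equals $1+q^{4n+2}$, resp.\ $(1-q^{2n+1})^2$, two separate evaluations follow:
\begin{itemize}
\item $S_1+qS_2$ is \eqref{eq-Chu} with $q\to q^2$, $a=-q^3$.
\item $S_1-qS_2$ carries a well-poised factor and is \eqref{eq-Rahman46} with $q\to q^2$, $a=q^3$, $b=-q^2$, $d=\omega q$.
\end{itemize}
You mention \eqref{eq-Rahman46} only in passing, without this combination or the specialization. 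The cube root of unity is essential: $(\omega q,\omega^2q;q^2)_n=(q^3;q^6)_n/(q;q^2)_n$ and $(\omega q^6,\omega^2q^6;q^4)_n=(q^{18};q^{12})_n/(q^6;q^4)_n$ are what generate the base-$q^6$ denominator. Solving this $2\times2$ system gives each $S_i$ as a sum of two products. Only then does your final theta-product step apply (the paper uses the Frye--Garvan algorithm there).

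For \eqref{cubic-id-3}, no specialization of \eqref{id-cubic-C} exists: your two matching conditions force $a=-q$ and $a=-1$ at once, and $a\mapsto aq$ again changes the summand by a non-rational factor. The missing step is the shift $n\to n-1$. Let $A_n$ be the summand of \eqref{id-cubic-C} at $a=-1$, so that $H:=\sum_{n\ge0}A_n$ is a product. Then $1+4q^2S_3=R:=\sum_{n\ge0}A_n\frac{2q^n}{1+q^{2n}}$, where $S_3$ is the left side. The factor $\frac{2q^n}{1+q^{2n}}$ cannot be absorbed by moving a parameter. Instead, $\frac{(1+q^n)^2}{1+q^{2n}}=1+\frac{2q^n}{1+q^{2n}}$ shows that $H+R=\sum A_n\frac{(1+q^n)^2}{1+q^{2n}}$. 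This is twice \eqref{eq-Rahman18} at $a=-1$, $b=e^{\pi i/3}$, $c=-q$; the sixth root of unity splits $(-1;q^3)_n$ and $(q^6;q^6)_n$ into Rahman's parameters. Hence $R=(H+R)-H$ is a difference of two products.

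Finally, you have \eqref{cubic-id-4} backwards: it is the easiest case, and $1/(1-q^n)$ is not a derivative. Since $(q^3;q^3)_{n-1}(1-q^n)=(q^3;q^3)_n/(1+q^n+q^{2n})$, the $n$th term equals $A_n\bigl(\frac12+\frac{q^n}{2(1+q^{2n})}\bigr)$. So the left side is $\frac12(H-1)+\frac14(R-1)$. That is why the right side consists of weight-zero eta quotients plus the constant $-\frac34$ (the removed $n=0$ terms) and contains no Lambert series. A differentiation route would have to create Lambert series and then make them cancel, and you do not name a series to differentiate.
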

\begin{proof}
(1) We denote the left side of \eqref{cubic-id-1} and \eqref{cubic-id-2} by $S_1(q)$ and $S_2(q)$, respectively.   We have
\begin{align}\label{BS-sum}
    S_1(q)+qS_2(q)=\sum_{n=0}^\infty \frac{(q;q^2)_n(-q^2;q^4)_{n+1}}{(q^4;q^4)_n(-q^3;q^6)_{n+1}}(-1)^nq^{n^2+2n}=\frac{J_3^2J_4J_{24}}{J_2J_6^2J_8}.
\end{align}
Here the last equality follows from \eqref{eq-Chu}  with $q$ replaced by $q^2$ and $a=-q^3$. 

Next, note that
\begin{equation}\label{BS-M}
 S_1(q)-qS_2(q)=\sum_{n=0}^{\infty}
 \frac{(q;q^2)_n(-q^2;q^4)_n(1-q^{2n+1})^2}
 {(q^4;q^4)_n(-q^3;q^6)_{n+1}}(-1)^nq^{n^2+2n}.
\end{equation}
Replacing $q$ by $q^2$ in \eqref{eq-Rahman46}, and then making the specialization
\begin{equation}\label{eq-cubic-1-proof-parameters}
       a=q^3,\quad b=-q^2,\quad d=\omega q, \quad \omega^3=1,\quad \omega \ne1,
\end{equation}
we obtain 
\begin{align}
 L(q) &:=\sum_{n=0}^{\infty}\frac{(q^3,\omega q,\omega^2q;q^2)_n(-q^2;q^4)_n}
 {(q^4;q^4)_n(\omega^2q^6,\omega q^6;q^4)_n(-q^3;q^2)_n}
 \frac{1-q^{6n+3}}{1-q^3}(-1)^nq^{n^2+2n} \nonumber \\
 &=\frac{(q^5,q^7,-\omega q^4,-\omega^2q^4;q^4)_\infty}{(-q^3,-q^5,\omega^2q^6,\omega q^6;q^4)_\infty}.\label{eq-cubic-1-L}
\end{align}
Note that
\begin{align}\label{eq-L-simplify}
 &\frac{(q^3,\omega q,\omega^2q;q^2)_n}{(\omega^2q^6,\omega q^6;q^4)_n(-q^3;q^2)_n}\frac{1-q^{6n+3}}{1-q^3} \nonumber \\
 &=\frac{(q^3;q^2)_n}{(-q^3;q^2)_n} \frac{(q^3;q^6)_n}{(q;q^2)_n}
 \frac{(q^6;q^4)_n}{(q^{18};q^{12})_n}\frac{1-q^{6n+3}}{1-q^3}=
 \frac{(q^3;q^2)_n^2}{(q;q^2)_n(-q^9;q^6)_n}.
\end{align}
Substituting it into \eqref{eq-cubic-1-L}, we deduce that
\begin{align}\label{eq-Lq-result}
 L(q)&=\sum_{n=0}^{\infty}
 \frac{(q;q^2)_n(-q^2;q^4)_n}
 {(q^4;q^4)_n(-q^9;q^6)_n}
 \left(\frac{1-q^{2n+1}}{1-q}\right)^2
 (-1)^nq^{n^2+2n}\nonumber \\
&=
 \frac{(q;q^2)_\infty^2(1+q^3)}{(1-q)^2}
 \frac{(-q^{12};q^{12})_\infty}
 {(-q^4;q^4)_\infty(q^{6};q^{12})_\infty} =\frac{1+q^3}{(1-q)^2}\frac{J_1^2J_4J_{24}}{J_2^2J_6J_8}.
\end{align}
Substituting \eqref{eq-Lq-result} into \eqref{BS-M}, we obtain
\begin{equation}\label{BS-minus}
 S_1(q)-qS_2(q)=\frac{J_1^2J_4J_{24}}{J_2^2J_6J_8}.
\end{equation}
From \eqref{BS-sum} and \eqref{BS-minus} we deduce that
\begin{align} 
2S_1(q)=\frac{J_3^2J_4J_{24}}{J_2J_6^2J_8}+\frac{J_1^2J_4J_{24}}{J_2^2J_6J_8},  \label{B-exp}\\
2qS_2(q)=\frac{J_3^2J_4J_{24}}{J_2J_6^2J_8}-\frac{J_1^2J_4J_{24}}{J_2^2J_6J_8}. \label{S-exp}
\end{align}
Using the Maple approach in \cite{Frye-Garvan}, we obtain \eqref{cubic-id-1} and \eqref{cubic-id-2} immediately. 

(2) We denote the the series on the left side of \eqref{cubic-id-3} and \eqref{cubic-id-4}by $S_3(q)$ and $S_4(q)$, respectively. Let
\begin{equation}\label{eq-cubic-thm1-R}
 R(q):=\sum_{n=0}^{\infty}
 \frac{(-1;q)_n(-1;q^2)_n}
 {(q;q^2)_n(q^3;q^3)_n}q^{n(n+3)/2}.
\end{equation}
Then it is easy to see that
\begin{equation}\label{eq-R-K}
 R(q)=1+4q^2S_3(q).
\end{equation}
Hence it suffices to evaluate $R(q)$. Note that 
\begin{equation}\label{eq-Rdef}
 R(q)=\sum_{n=0}^{\infty}A_n(q)\frac{2q^n}{1+q^{2n}}
\end{equation}
where
\begin{equation}\label{eq-An-def}
 A_n(q):=\frac{(-1;q)_n(-q^2;q^2)_n}
 {(q;q^2)_n(q^3;q^3)_n}q^{n(n+1)/2}.
\end{equation}
We further define
\begin{align}\label{eq-H-product}
    H(q):=\sum_{n=0}^{\infty}A_n(q)=
 \frac{(-q;q^2)_\infty(-q^3;q^6)_\infty}
 {(q;q^2)_\infty(q^3;q^6)_\infty}=\frac{J_2^3J_6^3}{J_1^3J_3^2J_4J_{12}}.
\end{align}
Here the penultimate equality follows from \eqref{id-cubic-C} with $a=-1$.

Let $\zeta=\e^{\pi i/3}$. Then $\zeta^3=-1$ and we have
\begin{align}
    (1+x)(1-\zeta x)(1-\zeta^{-1}x)=1+x^3.\label{zeta-id}
\end{align}
Hence 
\begin{align}
    (-a,\zeta a,\zeta^{-1} a;q)_n=(-a^3;q^3)_n, \quad n \in \mathbb{Z}_{\geq 0} \cup \{\infty\}.
\end{align}
We have
\begin{align}
    &H(q)+R(q)=\sum_{n=0}^\infty \frac{(-1;q)_n(-q^2;q^2)_n}{(q;q^2)_n(q^3;q^3)_n} \frac{(1+q^n)^2}{1+q^{2n}}q^{n(n+1)/2} \nonumber \\
    &=\sum_{n=0}^\infty \frac{(-1;q^2)_n(-q;q)_n(1+q^n)(-q^3;q^3)_n }{2(q;q^2)_n(q^6;q^6)_n}q^{n(n+1)/2} \nonumber \\
    &=\sum_{n=0}^\infty \frac{(-1;q^2)_n(-q;q)_n(1+q^n)(1+q^{3n})(-1;q^3)_n }{2(q;q^2)_n(q^2,-\zeta q^2,-q^2/\zeta;q^2)_n} q^{n(n+1)/2}\nonumber \\
    &=\sum_{n=0}^\infty \frac{(-1;q^2)_n(1+q^{3n})(\zeta,-q,\zeta^2;q)_n}{(q;q)_n(-q^2\zeta^2,q,-q^2\zeta;q^2)_n}q^{n(n+1)/2} \nonumber \\
    &=2 \frac{(-q^2;q^2)_\infty^2
       (\zeta q,\zeta^{-1}q;q^2)_\infty}
 {(q;q^2)_\infty^2
       (-\zeta^{-1}q^2,-\zeta q^2;q^2)_\infty} \nonumber \\
    &=2\frac{(-q^2;q^2)_\infty^2}{(q;q^2)_\infty^2} \frac{(-q^3;q^6)_\infty}{(-q;q^2)_\infty} \frac{(q^2;q^2)_\infty}{(q^6;q^6)_\infty}=2
\frac{J_4^3J_6}{J_1J_2J_3J_{12}}.  \label{eq-HplusR}
\end{align}
Here the fifth equality follows by setting  $a=-1$, $b=\zeta$ and  $c=-q$ in \eqref{eq-Rahman18}.

From \eqref{eq-HplusR} and \eqref{eq-H-product} we deduce that
\begin{align}\label{thm4-R-exp}
 R(q)=2\frac{J_4^3J_6}{J_1J_2J_3J_{12}}-\frac{J_2^3J_6^3}{J_1^2J_3^2J_4J_{12}}.
\end{align}
Substituting it into \eqref{eq-R-K} and then using the Maple approach in \cite{Frye-Garvan}, we obtain \eqref{cubic-id-3}.

Let $L_n$ denote the $n$th summand in the left side of \eqref{cubic-id-4}.  It is easy to see that for $n\geq 1$,
\begin{equation}\label{eq-Ln-An}
 L_n=A_n(q)\frac{1+q^n+q^{2n}}{2(1+q^{2n})}
 =A_n(q)\left(\frac12+\frac{q^n}{2(1+q^{2n})}\right).
\end{equation}
Since $A_0=1$, summing \eqref{eq-Ln-An} over $n\geq 1$ gives
\begin{align*}
 S_4(q)=\frac12\big(H(q)-1\big)
       +\frac14\big(R(q)-1\big).
\end{align*}
Substituting \eqref{eq-H-product} and \eqref{thm4-R-exp} into it, we obtain \eqref{cubic-id-4}.
\end{proof}

\begin{rem}
The identity \eqref{cubic-id-4}can also be proved in the following way. Replacing $n$ by $n+1$ in the summand, we have 
\begin{align}\label{cubic-thm4-L-L1L2}
    S_4(q)&=\sum_{n=0}^\infty \frac{(-q;q)_n(-q^2;q^2)_n}{(q;q^2)_{n+1}(q^3;q^3)_n(1-q^{n+1})}q^{(n+1)(n+2)/2} \nonumber \\
    &=\sum_{n=0}^\infty \frac{(1+q^{n+1}+q^{2n+2})(-q;q)_n(-q^2;q^2)_n}{(q;q^2)_{n+1}(q^3;q^3)_n(1-q^{3n+3})}q^{(n+1)(n+2)/2} \nonumber \\
    &=L_1(q)+q^2L_2(q)
\end{align}
where
\begin{align}
L_1(q)&=\sum_{n=0}^\infty \frac{(-q;q)_n(-q^2;q^2)_{n+1}}{(q;q^2)_{n+1}(q^3;q^3)_{n+1}}q^{(n+1)(n+2)/2}, \\
L_2(q)&=\sum_{n=0}^\infty \frac{(-q;q)_n(-q^2;q^2)_n}{(q;q^2)_{n+1}(q^3;q^3)_{n+1}}q^{(n^2+5n)/2}.
\end{align}
We have
\begin{align}
    L_1(q)&=\frac{1}{2}\Big(\sum_{n=0}^\infty \frac{(-1;q)_n(-q^2;q^2)_n}{(q;q^2)_n(q^3;q^3)_n} q^{n(n+1)/2}-1\Big)=\frac{1}{2}\Big(\frac{J_2^3J_6^3}{J_1^2J_3^2J_4J_{12}}-1  \Big),  \label{cubic-thm4-proof-L1}\\
    L_2(q)&=\frac{J_8J_{12}^5J_{2,24}}{J_4J_6J_{1,12}J_{2,12}J_{3,12}^2J_{5,12}}. \label{cubic-thm4-proof-L2}
\end{align}
Here the last equality in \eqref{cubic-thm4-proof-L1} and \eqref{cubic-thm4-proof-L2} follow from \eqref{eq-H-product} and \eqref{cubic-id-3}, respectively.
Substituting \eqref{cubic-thm4-proof-L1} and \eqref{cubic-thm4-proof-L2} into \eqref{cubic-thm4-L-L1L2}, we obtain 
\begin{align}\label{cubic-thm4-5-new}
    S_4(q)=\frac{1}{2}\Big(\frac{J_2^3J_6^3}{J_1^2J_3^2J_4J_{12}}-1\Big)+q^2\frac{J_8J_{12}^5J_{2,24}}{J_4J_6J_{1,12}J_{2,12}J_{3,12}^2J_{5,12}}.
\end{align}
This proves \eqref{cubic-id-4}upon using the Maple approach in \cite{Frye-Garvan} to show its equivalence with \eqref{cubic-thm4-5-new}.
\end{rem}

\begin{theorem}\label{thm-cubic-2}
Let $\omega$ be a primitive cubic root of unity. We have
\begin{align}
 \sum_{n=0}^\infty \frac{(-\omega;q)_n(-\omega^2;q^2)_n}{(\omega^2q;q^2)_n (q^3;q^3)_n}\omega^nq^{n(n+3)/2}
& = \frac{J_{5,12}}{J_3(\omega q^2;q^4)_\infty},  \label{eq-cubic-root-id-1}  \\
 \sum_{n=0}^{\infty}
 \frac{(\omega q^4;q^4)_n}
 {(q^3;q^3)_n(\omega q;q)_{n+1}(\omega^2q^;q^2)_{n+1}}
 \omega^nq^{n(n+3)/2}&=\frac{J_{1,12}}
 {J_3(\omega q^2;q^4)_\infty}.  \label{eq-cubic-root-id-2}
\end{align}
\end{theorem}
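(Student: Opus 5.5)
The strategy is the same as for \eqref{cubic-id-3}: use the factorization \eqref{zeta-id}, now with $\omega$, to turn each cubic series into a quadratic series in base $q$ and $q^2$, and then sum it by one of Rahman's formulas \eqref{eq-Rahman18} or \eqref{eq-Rahman46}.

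\textbf{First identity \eqref{eq-cubic-root-id-1}.} Since $(1-x)(1-\omega x)(1-\omega^2x)=1-x^3$, we have
\[
(q^3;q^3)_n=(q,\omega q,\omega^2 q;q)_n .
\]
Splitting $(\omega q,\omega^2 q;q)_n$ into its even and odd factors, we can write it through base-$q^2$ products of the form $(\omega q,\omega q^2,\omega^2q,\omega^2 q^2;q^2)_n$. The numerator factor $(-\omega;q)_n$ pairs naturally with $(-\omega^2;q^2)_n$ and with the denominator factor $(\omega^2 q;q^2)_n$.

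The goal is to match the summand with that of \eqref{eq-Rahman18}, which has the shape
\[
\frac{(a;q^2)_n(b,c,aq/(bc);q)_n}{(q;q)_n(aq^2/b,aq^2/c,bcq;q^2)_n}\,q^{n(n+1)/2},
\]
with the very-well-poised factor $(1-aq^{3n})/(1-a)$ either absent or produced artificially. A natural first guess is $a=-\omega$ or $a=-\omega^2$, with $b,c\in\{-\omega,\omega q,\dots\}$. The extra factor $\omega^n q^{n}$ beyond $q^{n(n+1)/2}$ should then come from a degenerate specialization.

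If \eqref{eq-Rahman18} does not fit, I would try \eqref{eq-Rahman46}, whose summand already carries $(b;q^2)_n$ over $(aq/b;q)_n$ together with a factor $(aq/b)^nq^{\binom n2}$; this is exactly a power $\omega^nq^{\binom{n}{2}+2n}$ when $aq/b=\omega q^2$. So I would take $aq/b=\omega q^2$, say $b=-\omega^2$ and $a=-q$, and use the pair $(d,q/d)$ to produce the remaining cubic-root factors.

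Once the product side is obtained, I would simplify it using $(x,\omega x,\omega^2x;q)_\infty=(x^3;q^3)_\infty$ and compare it with $J_{5,12}/(J_3(\omega q^2;q^4)_\infty)$. Any residual dissection can be checked by the Frye--Garvan procedure, as was done before.

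\textbf{Second identity \eqref{eq-cubic-root-id-2}.} I would proceed in the same way. Here $(\omega q^4;q^4)_n$ in the numerator and $(\omega q;q)_{n+1}(\omega^2q;q^2)_{n+1}$ in the denominator again combine, after the cube-root splitting of $(q^3;q^3)_n$, into a quadratic $q/q^2$ summand. The shifted indices $n+1$ suggest first writing the series as a sum starting at $n=1$, or as a very-well-poised series with $a=\omega q$ or $a=\omega^2q$ and its factor $(1-aq^{3n})$. Alternatively, I would look for a linear relation between the two sums in Theorem \ref{thm-cubic-2}, or between each sum and its image under $\omega\mapsto\omega^2$, analogous to how $S_1\pm qS_2$ was handled in Theorem \ref{thm-cubic-1}. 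The product sides differ only in $J_{5,12}$ versus $J_{1,12}$, which suggests such a combination.

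\textbf{Main obstacle.} The hard step is finding the exact parameter specialization of \eqref{eq-Rahman18} or \eqref{eq-Rahman46}, or the right auxiliary combination, that reproduces each summand. Complex $\omega$ makes the matching delicate. If no direct specialization fits, I would split the summand as $\frac{1-aq^{3n}}{1-a}$ plus a correction, as in \eqref{eq-L-simplify}, and evaluate the correction by \eqref{id-cubic-C} or \eqref{eq-Chu} with complex $a$.
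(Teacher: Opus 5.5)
There is a genuine gap. Your plan never produces a summation that works, and its central step fails: matching the summand of \eqref{eq-cubic-root-id-1} to a single specialization of \eqref{eq-Rahman18} or \eqref{eq-Rahman46}. Write $u_n$ and $v_n$ for the summands of \eqref{eq-cubic-root-id-1} and \eqref{eq-cubic-root-id-2}. Your choice $a=-q$, $b=-\omega^2$ in \eqref{eq-Rahman46} does produce $(-\omega^2;q^2)_n$ and $\omega^nq^{\binom n2+2n}$. But even with the best choice $d=-\omega$, the resulting summand equals
\[
\frac{(1-\omega q)(1-\omega^2q)}{1+q}\cdot\frac{1+q^{3n+1}}{(1-\omega q^{n+1})(1-\omega^2q^{2n+1})}\,u_n ,
\]
which is not a constant multiple of $u_n$. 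No other $d$ repairs this. In the term ratio, written in $x=q^n$, the three zeros of $1+q^4x^3$ would have to be cancelled, and the base-$q^2$ factors $(-q^3/d,-dq^2;q^2)_n$ contribute even quadratics, each of which can absorb at most one of them. Formula \eqref{eq-Rahman18} has no free argument that could create $\omega^n$. Your fallback ("split off $(1-aq^{3n})/(1-a)$ plus a correction") names no actual split, so it does not close the gap. In the paper the product side is reached only after adding two different products and invoking a genuine theta relation, which is a strong sign that no one-shot specialization exists.

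The missing idea is a two-term decomposition. Let $h_n$ be $u_n$ with $(-\omega^2;q^2)_n$ replaced by $(-\omega^2q^2;q^2)_n$ and $q^{n(n+3)/2}$ replaced by $q^{n(n+1)/2}$. Then $\sum h_n$ is \eqref{id-cubic-C} at $a=-\omega$, and $u_n/h_n=-\omega q^n/(1+\omega^2q^{2n})$. Hence $2u_n-h_n=-\frac{(1+\omega q^n)^2}{1+\omega^2q^{2n}}h_n$, which after the cube-root factorizations is exactly the summand of \eqref{eq-Rahman46} with $a=-1$, $b=d=-\omega^2$. So $\sum u_n$ is half the sum of two explicit products, and one still needs the theta identity $X+Y=2$ (checked by the Frye--Garvan method) to collapse it to $J_{5,12}/(J_3(\omega q^2;q^4)_\infty)$. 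For \eqref{eq-cubic-root-id-2}, your parameters $a=-q$, $b=-\omega^2$, $d=-\omega$ are exactly right, but they apply to the combination $u_n+qv_n$, not to a single summand. Since $v_n/u_n=\frac{(1+\omega q^n)(1+\omega^2q^{2n})}{(1-\omega q^{n+1})(1-\omega^2q^{2n+1})}$, one gets $u_n+qv_n=\frac{(1+q)(1+q^{3n+1})}{(1-\omega q^{n+1})(1-\omega^2q^{2n+1})}u_n$, which is summable by \eqref{eq-Rahman46}. Then $q\sum v_n=\sum(u_n+qv_n)-\sum u_n$. So the second identity follows from the first together with the theta identity $J_2^3/(J_1J_4)=J_{5,12}+qJ_{1,12}$. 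Your remark about a linear relation between the two sums points in this direction, but it only works once \eqref{eq-cubic-root-id-1} has been proved by the decomposition above.
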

\begin{proof}
(1) We denote the left side of \eqref{eq-cubic-root-id-1} as
\begin{align}\label{eq-Udef}
 U(q):=\sum_{n=0}^{\infty}u_n(q),  \quad
 u_n(q):=\frac{(-\omega;q)_n(-\omega^2;q^2)_n}{(\omega^2q;q^2)_n(q^3;q^3)_n}\omega^nq^{n(n+3)/2}.
\end{align}

We define a closely related series which can be evaluated by \eqref{id-cubic-C}:
\begin{align}\label{eq-hAdef}
H(q):=\sum_{n=0}^{\infty}h_n(q), \quad 
 h_n(q):=\frac{(-\omega;q)_n(-\omega^2q^2;q^2)_n}{(\omega^2q;q^2)_n(q^3;q^3)_n}\omega^nq^{n(n+1)/2}.
\end{align}
Setting $a=-\omega$ in \eqref{id-cubic-C}, we obtain
\begin{align}\label{eq-Hsum}
 H(q)=\frac{(-\omega q;q^2)_\infty(-q^3;q^6)_\infty}{(\omega^2q;q^2)_\infty(q^3;q^6)_\infty}.
\end{align}
Note that
\begin{align}
 \frac{u_n(q)}{h_n(q)}
 =q^n\frac{(-\omega^2;q^2)_n}{(-\omega^2q^2;q^2)_n}
 =q^n\frac{1+\omega^2}{1+\omega^2q^{2n}}
 =-\frac{\omega q^n}{1+\omega^2q^{2n}}.
\end{align}
Consequently,
\begin{align}\label{eq-key-term}
&r_n(q):= 2u_n(q)-h_n(q)
 =-\frac{(1+\omega q^n)^2}{1+\omega^2q^{2n}}h_n(q) \nonumber \\
&=-\frac{(1+\omega q^n)^2}{1+\omega^2q^{2n}} \times \frac{(-\omega;q)_n(-\omega^2q^2;q^2)_n}
      {(\omega^2q;q^2)_n(q^3;q^3)_n}
 \omega^nq^{n(n+1)/2} \nonumber \\
&=-\frac{1+\omega}{1+\omega^2} \frac{(-\omega q;q)_n(-\omega^2;q^2)_n(1+\omega q^n)}{(\omega^2q;q^2)_n(q^3;q^3)_n}\omega^nq^{n(n+1)/2} \nonumber \\
&=-\omega \frac{(-\omega q;q)_n(-\omega^2;q^2)_n(-\omega^2q;q)_n(1+\omega q^n)}{(\omega^2q;q^2)_n(q,\omega q,\omega^2q;q)_n(-\omega^2q;q)_n}\omega^nq^{n(n+1)/2} \nonumber \\
&=-\frac{\omega}{1+\omega^2} \frac{(-q;q)_n(-\omega^2,-\omega q;q)_n(-\omega^2;q^2)_n(1+\omega q^n)(1+\omega^2q^n)}{(q^2;q^2)_n(\omega q;q)_n(\omega^2q,\omega q^2;q^2)_n} \omega^nq^{n(n+1)/2} \nonumber \\
&=\frac{(-1;q)_n(-\omega^2,-\omega q;q)_n(-\omega^2;q^2)_n(1+q^{3n})}{2(q^2;q^2)_n(\omega q;q)_n(\omega^2q,\omega q^2;q^2)_n} \omega^nq^{n(n+1)/2}.
\end{align}
Here we used the simple fact:
\begin{equation}\label{eq-cubic-factor-1}
\begin{split}
(1-x)(1-\omega x)(1-\omega^2x)&=1-x^3, \\
(x;q)_n(\omega x;q)_n(\omega^2x;q)_n&=(x^3;q^3)_n, \quad n \in \mathbb{Z}_{\geq 0} \cup\{\infty\}.
\end{split}
\end{equation}
Setting $a=-1$ and $b=d=-\omega^2$ in \eqref{eq-Rahman46}, then
\begin{align*}
 \frac{aq}{b}=\omega q, \quad  \frac{aq^2}{d}=\omega q^2, \quad
 adq=\omega^2q, \quad  \frac{aq^2}{bd}=-\omega^2q^2,
\end{align*}
and we deduce from \eqref{eq-Rahman46} that
\begin{align}\label{R-prod}
   R(q):=\sum_{n=0}^\infty r_n(q)=
 \frac{(-q;q^2)_\infty^2(-q^2;q^2)_\infty
       (-\omega^2q^2;q^2)_\infty}
 {(\omega q;q^2)_\infty(\omega q^2;q^2)_\infty^2
       (\omega^2q;q^2)_\infty}.
\end{align}

By definition we have
\begin{align}\label{U-H-R}
 U(q)=\frac{H(q)+R(q)}{2}.
\end{align}

Let
\begin{equation}
\label{eq-P}
 P(q):=\frac{J_{1,12}J_{5,12}^{2}}{J_{12}J_{3,12}J_6}
 \frac{(\omega q;q^2)_\infty}{(-\omega^2q;q^2)_\infty}.
\end{equation}
From \eqref{eq-cubic-factor-1} we have
\begin{align}
\label{eq-HX-EY}
 H(q)=P(q)X(q), \quad  E(q)=P(q)Y(q),
\end{align}
where
\begin{align}\label{eq-X}
 X(q)=\frac{J_{1,12}J_6^2}{J_{12}J_{2,12}J_{3,12}}, \quad 
 Y(q)=\frac{J_{2,12}^2J_{4,12}J_6^2}{J_{12}J_{1,12}J_{3,12}J_{5,12}^2}.
\end{align}
Thus the remaining assertion is the theta-product identity
\begin{align}\label{eq-XY}
 X(q)+Y(q)=2.
\end{align}
This can be proved automatically using the Maple approach in \cite{Frye-Garvan}.

(2) Let $v_n(q)$ denote the $n$th summand in the left side of \eqref{eq-cubic-root-id-2} and we use $V(q):=\sum_{n\geq 0}v_n(q)$ to denote the sum on its left side. We claim that for every $n\ge0$,
\begin{equation}
\label{eq-vu-ratio}
 \frac{v_n(q)}{u_n(q)}
 = \frac{(1+\omega q^n)(1+\omega^2q^{2n})}
 {(1-\omega q^{n+1})(1-\omega^2q^{2n+1})}.
\end{equation}
In fact, we first observe that
\begin{equation}
\label{eq-quartic-factorization}
 (\omega q^4;q^4)_n=(\omega q;q)_n(-\omega q;q)_n(-\omega^2q^2;q^2)_n.
\end{equation}
Using \eqref{eq-quartic-factorization} and cancelling the common factors
gives
\begin{align*}
\frac{v_n(q)}{u_n(q)}
&=\frac{(\omega q;q)_n}{(\omega q;q)_{n+1}}
\frac{(-\omega q;q)_n}{(-\omega;q)_n}
\frac{(-\omega^2q^2;q^2)_n}{(-\omega^2;q^2)_n}
\frac{(\omega^2q;q^2)_n}{(\omega^2q;q^2)_{n+1}}\\
&=\frac{1}{1-\omega q^{n+1}}
\frac{1+\omega q^n}{1+\omega}
\frac{1+\omega^2q^{2n}}{1+\omega^2}
\frac{1}{1-\omega^2q^{2n+1}}.
\end{align*}
Since $(1+\omega)(1+\omega^2)=1$, we prove \eqref{eq-vu-ratio}. 

From \eqref{eq-vu-ratio} we have
\begin{align}\label{v-u-u-ratio}
\frac{u_n(q)+qv_n(q)}{u_n(q)}
=1+q\frac{(1+\omega q^n)(1+\omega^2q^{2n})}{(1-\omega q^{n+1})(1-\omega^2q^{2n+1})}=\frac{(1+q)(1+q^{3n+1})}{(1-\omega q^{n+1})(1-\omega^2q^{2n+1})}.
\end{align}
Hence
\begin{align}
&\overline{r}_n(q):=u_n(q)+qv_n(q)=\frac{(1+q)(1+q^{3n+1})}{(1-\omega q^{n+1})(1-\omega^2q^{2n+1})}u_n(q) \nonumber \\
&=\frac{(1+q)(1+q^{3n+1})}{(1-\omega q^{n+1})(1-\omega^2q^{2n+1})} \times \frac{(-\omega;q)_n(-\omega^2;q^2)_n}{(\omega^2q;q^2)_n (q^3;q^3)_n}\omega^nq^{n(n+3)/2} \nonumber \\
&=\frac{1+q}{(1-\omega q)(1-\omega ^2q)}\times \frac{(\omega q;q)_n}{(\omega q^2;q)_n}\frac{(1+q^{3n+1})(-\omega ;q)_n(-\omega^2;q^2)_n}{(\omega^2q^3;q^2)_n(q^3;q^3)_n}\omega^nq^{n(n+3)/2} \nonumber \\
&=\frac{1+q}{(1-\omega q)(1-\omega ^2q)}\times \frac{(\omega q;q)_n}{(\omega q^2;q)_n}\frac{(1+q^{3n+1})(-\omega ;q)_n(-\omega^2;q^2)_n}{(\omega^2q^3;q^2)_n(q,\omega q,\omega^2q;q)_n}\omega^nq^{n(n+3)/2} \nonumber \\
&=\frac{1+q}{(1-\omega q)(1-\omega ^2q)}\times \frac{1}{(\omega q^2;q)_n}\frac{(1+q^{3n+1})(-\omega ;q)_n(-\omega^2;q^2)_n(-\omega^2q;q)_n}{(\omega^2q^3;q^2)_n(q;q)_n(\omega^4q^2;q^2)_n}\omega^nq^{n(n+3)/2} \nonumber \\
&=\frac{(1+q)^2}{(1-\omega q)(1-\omega ^2q)}\times\frac{(-q;q)_n(1+q^{3n+1})
      (-\omega,-\omega^2q;q)_n(-\omega^2;q^2)_n}
{(q^2;q^2)_n(1+q)
 (\omega^2q^3,\omega q^2;q^2)_n(\omega q^2;q)_n}
\omega^nq^{n(n+3)/2}.
\end{align}
Setting $a=-q$, $b=-\omega^2$ and $d=-\omega$ in \eqref{eq-Rahman46},  we deduce that
\begin{align}
\label{eq-bar-Rdef}
 \overline{R}(q)&:=\sum_{n=0}^{\infty} \overline{r}_n(q)
 =\frac{(1+q)^2}{(1-\omega q)(1-\omega ^2q)}\times
 \frac{(-q^2;q^2)_\infty(-q^3;q^2)_\infty^2
       (-\omega^2q^2;q^2)_\infty}
 {(\omega q^2;q^2)_\infty^2
  (\omega q^3;q^2)_\infty(\omega^2q^3;q^2)_\infty} \nonumber \\
  &=\frac{J_2^2J_6}{J_1J_3J_4}\frac{(-\omega^2q^2;q^2)_\infty}{(\omega q^2;q^2)_\infty^2}.
\end{align}
By definition we have
\begin{equation}
\label{eq-V-RU}
qV(q)=\overline{R}(q)-U(q).
\end{equation}

Now we denote the infinite product in the right side of \eqref{eq-cubic-root-id-2} as 
\begin{equation}
\label{eq-Pdef}
 \overline{P}(q):=
\frac{J_{1,12}}
 {J_3(\omega q^2;q^4)_\infty}.
\end{equation}
Then it is easy to see from \eqref{eq-cubic-root-id-1} that
\begin{align}\label{U-P-X}
     U(q)=\overline{P}(q)\overline{X}(q), \quad \overline{X}(q)=\frac{J_{5,12}}{J_{1,12}}. 
\end{align}
To compare $\overline{R}(q)$ with $\overline{P}(q)$, we note that
\begin{align}
   &\frac{(-\omega^2q^2;q^2)_\infty}{(\omega q^2;q^2)_\infty^2} (\omega q^2;q^4)_\infty =\frac{(-\omega^2q^2;q^2)_\infty(\omega^2q,-\omega^2q;q^2)_\infty}{(\omega q^2;q^2)_\infty^2} \nonumber \\
    &=\frac{(-\omega^2q;q)_\infty (\omega^2q;q^2)_\infty}{(\omega^4q^2;q^2)_\infty(\omega q^2;q^2)_\infty}=\frac{1}{(\omega ^2q^2;q^2)_\infty (\omega q^2;q^2)_\infty}=\frac{J_2}{J_6}.
\end{align}
Hence \eqref{eq-bar-Rdef} implies
\begin{equation}\label{R-P-Y}
\overline{R}(q)=\overline{P}(q)\overline{Y}(q), \quad \overline{Y}(q)= \frac{J_2^3}{J_1J_4J_{1,12}}.
\end{equation}
Using the Maple approach in \cite{Frye-Garvan}, we proved that
\begin{align}
    \overline{Y}(q)-\overline{X}(q)=q.
\end{align}
Substituting \eqref{U-P-X} and \eqref{R-P-Y}  into \eqref{eq-V-RU}, we conclude that $V(q)=\overline{P}(q)$. This proves \eqref{eq-cubic-root-id-2}.
\end{proof}

\section{Proofs of the quadruple sum identities}\label{sec-quadruple}
Throughout this section we let $\omega$ be a primitive cubic root of unity. We start with the integral form stated by the author  \cite[(4.18)]{Wang2023} of $F(u,v,w,t)$ defined in \eqref{F-quadruple-defn}:
\begin{align}
F(u,v,w,t)
&=\oint \sum_{i=0}^\infty \frac{(-uz)^i}{(q;q)_i} \sum_{j=0}^\infty \frac{(vz^2)^j}{(q^2;q^2)_j} \sum_{k=0}^\infty \frac{(-wz^3)^k}{(q^3;q^3)_k} \sum_{\ell=0}^\infty \frac{q^{2\ell^2-2\ell}(-tz^4)^\ell}{(q^4;q^4)_\ell} \nonumber \\
& \quad \quad  \times \sum_{s=-\infty}^\infty (-1)^s q^{\binom{s}{2}}z^{-s} \frac{dz}{2\pi \ii z} \nonumber \\
&=\oint \frac{(tz^4;q^4)_\infty (qz,1/z,q;q)_\infty}{(-uz;q)_\infty (vz^2;q^2)_\infty (-wz^3;q^3)_\infty} \frac{dz}{2\pi \ii z} \nonumber \\
&=\oint \frac{(t^{\frac{1}{4}}z,-t^{\frac{1}{4}}z,\ii t^{\frac{1}{4}}z,-\ii t^{\frac{1}{4}}z,qz,1/z,q;q)_\infty}{(-uz,v^{\frac{1}{2}}z,-v^{\frac{1}{2}}z,-w^{\frac{1}{3}}z,-\omega w^{\frac{1}{3}}z,-\omega^2 w^{\frac{1}{3}}z;q)_\infty} \frac{dz}{2\pi \ii z}. \label{QF-2}
\end{align}

For each of the quadruple sums in \eqref{I5-quadruple}--\eqref{I6-quadruple}, we have $t=v^2$. Since $(v^2z^4;q^4)_\infty=(vz^2,-vz^2;q^2)_\infty$, the penultimate line of \eqref{QF-2} reduces the quadruple sums in \eqref{I5-quadruple} and \eqref{I6-quadruple} to the triple sums in \eqref{I5} and \eqref{I6}, respectively. By contrast, the relation $t=v^2$ is not satisfied for the quadruple sums in \eqref{KR7}--\eqref{KR8a}. Therefore, these quadruple sums do not appear to admit analogous reductions to similar triple sums, which makes the corresponding identities considerably more difficult to prove.

\subsection{Proof of \eqref{KR7}}
We have 
\begin{align}
F(q,q^3,q^6,q^8)=\oint \frac{(q^2z,\ii q^2z,-\ii q^2z,qz,1/z,q;q)_\infty}{(-qz,q^{\frac{3}{2}}z,-q^{\frac{3}{2}}z,-\omega q^2z,-\omega^2 q^2z;q)_\infty} \frac{dz}{2\pi \ii z}.
\end{align}
Now applying Lemma \ref{lem-integral} with 
\begin{align}
A=4, B=1, C=5, D=0, a_1=q^2, a_2=\ii q^2, a_3=-\ii q^2, a_4=q, \nonumber \\
b_1=1, c_1=q^{\frac{3}{2}}, c_2=-q^{\frac{3}{2}}, c_3=-q, c_4=-\omega q^2, c_5=-\omega^2 q^2,
\end{align}
we deduce that
\begin{align}\label{quad-id1-F-split}
F(q,q^3,q^6,q^8)=T_1(q)+T_2(q)+T_3(q)+T_4(q)+T_5(q),
\end{align}
where
\begin{align}
    T_1(q)&=K_1(1;q)R_1(1;q), \quad T_2(q)=K_1(-1;q)R_1(-1;q), \label{quad-id1-proof-T1T2} \\
    T_4(q)&=K_2(\omega;q)R_2(\omega;q),\quad
 T_5(q)=K_2(\omega^2;q)R_2(\omega^2;q) \label{quad-id1-proof-T4T5}
\end{align}
and
\begin{align}
T_3(q)&=2\frac{(-q;q)_\infty^3 (q^4;q^4)_\infty}{(q^3;q^3)_\infty}\left(1+\frac{4}{3}\sum_{n=1}^\infty \frac{(-q;q)_{n-1}(-q^2;q^2)_{n-1}}{(q^3;q^3)_{n-1}(q;q^2)_n (1-q^n)}q^{n(n+1)/2}  \right).  \label{quad-id1-proof-T3} 
\end{align}
Here the relevant series are defined as
\begin{align}
K_1(\zeta;q)&=-\frac{1}{2}\frac{(\zeta q^{\frac{1}{2}};q)_\infty^3(-q;q^2)_\infty}{(-q;q)_\infty (-\zeta q^{\frac{3}{2}};q^3)_\infty}, \\
R_1(\zeta;q)&=\sum_{n=0}^\infty \frac{(\zeta q^{\frac{1}{2}};q)_n (-q;q^2)_n }{(q^2;q^2)_n (-\zeta q^{\frac{3}{2}};q^3)_n (1+\zeta q^{n+\frac{1}{2}})} (-\zeta )^nq^{\frac{1}{2}n^2+n}, \\
K_2(\zeta;q)&=
 \frac{(-\zeta q^2,-\zeta^2,-\zeta^2q^{-1};q)_\infty
       (-\zeta;q^2)_\infty}
 { (\zeta^2q^{-1},\zeta;q)_\infty
       (\zeta q^{-1};q^2)_\infty},     \label{eq-id1-K2}\\
R_2(\zeta;q)&=\sum_{n=0}^\infty
 \frac{(\zeta q^4;q^4)_n}
 {(q^3;q^3)_n(\zeta q^2;q)_n(\zeta^2q^3;q^2)_n}
 \zeta^nq^{n(n+3)/2}.       \label{eq-id1-R2}
\end{align}

 From \eqref{cubic-id-1} we have
\begin{align}\label{quad-id1-T1}
S(q):=T_1(q^2)=-\frac{1}{2}\frac{J_1^3J_3J_4J_{12}J_{24}^2J_{1,12}J_{3,12}}{J_2^3J_6^4J_8^2J_{2,24}}.
\end{align}
From \eqref{quad-id1-proof-T1T2} we see that
\begin{align}\label{quad-id1-T2}
T_2(q^2)=S(-q)=-\frac{1}{2}\frac{J_2^6J_{12}^4J_{6,24}}{J_1^3J_3J_4^2J_6J_8^2J_{1,12}J_{3,12}}.
\end{align}

Substituting \eqref{cubic-id-4}into \eqref{quad-id1-proof-T3}, we obtain
\begin{equation}\label{quad-id1-T3}
 T_3(q)=\frac{2J_2^6J_6^3}{3J_1^5J_3^3J_{12}}+\frac{4J_2^2J_4^4J_6}{3J_1^4J_3^2J_{12}}.
\end{equation}

Substituting \eqref{eq-cubic-root-id-2} into \eqref{quad-id1-proof-T4T5}, we deduce that
\begin{equation}\label{quad-id1-T4T5}
T_4(q)=T_5(q)=-\frac{q}{3}\frac{J_{1,12}^4J_{2,12}^2J_{4,12}J_{5,12}^3}
 {J_{3,12}J_6J_{12}^8}.
\end{equation}
Substituting \eqref{quad-id1-T1}--\eqref{quad-id1-T4T5} into \eqref{quad-id1-F-split}, and using the Maple approach in \cite{Frye-Garvan} to verify theta function identities, we obtain \eqref{KR7}.

\subsection{Proof of \eqref{KR7a}}
We have
\begin{align}
F(q^3,q^5,q^6,q^{12})=\oint \frac{(q^3z,\ii q^3z,-\ii q^3z,qz,1/z,q;q)_\infty}{(q^{\frac{5}{2}}z,-q^{\frac{5}{2}}z,-q^2z,-\omega q^2z,-\omega^2 q^2z;q)_\infty} \frac{dz}{2\pi \ii z}.
\end{align}
Now setting
\begin{align}
A=4, B=1, C=5, D=0, a_1=q^3, a_2=\ii q^3, a_3=-\ii q^3, a_4=q, \nonumber \\
b_1=1, c_1=q^{\frac{5}{2}}, c_2=-q^{\frac{5}{2}}, c_3=-q^2, c_4=-\omega q^2, c_5=-\omega^2 q^2
\end{align}
in Lemma \ref{lem-integral}, we deduce that
\begin{align}\label{id2-F-split}
F(q^3,q^5,q^6,q^{12})=T_1(q)+T_2(q)+T_3(q)+T_4(q)+T_5(q),
\end{align}
where
\begin{align}
    T_1(q)&=K_1(1;q)R_1(1;q), \quad  T_2(q)=K_1(-1;q)R_1(-1;q), \label{quad-id2-proof-T1T2} \\
    T_3(q)&=-2K_2(1;q)R_2(1;q), \quad T_4(q)=K_2(\omega;q)R_2(\omega;q), \label{quad-id2-proof-T3T4} \\
    T_5(q)&=K_2(\omega^2;q)R_2(\omega^2;q), \label{quad-id2-proof-T5}
\end{align}
with
\begin{align}
K_1(\zeta;q)&=\frac{1}{2}\frac{(\zeta q^{\frac{5}{2}},\zeta q^{\frac{1}{2}},\zeta q^{-\frac{3}{2}};q)_\infty (-q;q^2)_\infty}{(-q;q)_\infty (-\zeta q^{-\frac{3}{2}};q^3)_\infty}, \\
R_1(\zeta;q)&=\sum_{n=0}^\infty \frac{(\zeta q^{\frac{1}{2}};q)_n(-q;q^2)_n}{(q^2;q^2)_n(-\zeta q^{\frac{9}{2}};q^3)_n}(-\zeta)^nq^{\frac{1}{2}n^2+2n}, \\
K_2(\zeta;q)&=-\frac{1}{3}q^{-1}\frac{(-\zeta q^2;q^2)_\infty (-\zeta^2q;q)_\infty^2 (-\zeta q,q;q)_\infty}{(q^3;q^3)_\infty (\zeta q;q^2)_\infty}, \\
R_2(\zeta;q)&=\sum_{n=0}^\infty \frac{(-\zeta;q)_n (-\zeta^2;q^2)_n}{(q^3;q^3)_n (\zeta^2 q;q^2)_n} \zeta^n q^{\frac{1}{2}n^2+\frac{3}{2}n}.
\end{align}
From \eqref{cubic-id-2} we have
\begin{align}\label{quad-id2-T1}
   S(q):=T_1(q^2)=\frac{1}{2}q^{-1}\frac{J_1^3J_3J_4J_{3,12}J_{5,12}J_{24}^2}{J_2^3J_6^2J_8J_{6,12}J_{8,24}J_{10,24}}.
\end{align}
From \eqref{quad-id2-proof-T1T2} we have
\begin{align}\label{quad-id2-T2}
    T_2(q^2)=S(-q)=-\frac{1}{2}q^{-1}\frac{J_2^6J_6J_{24}^2\overline{J}_{3,12}\overline{J}_{5,12}}{J_1^3J_3J_4^2J_8J_{12}J_{6,12}J_{8,24}J_{10,24}}.
\end{align}

From \eqref{cubic-id-3} we have
\begin{align}\label{quad-id2-T3}
    T_3(q)=\frac{2}{3}q^{-1}\frac{J_2^3J_4}{J_1^3J_3}\Big(1+4q^2\frac{J_8J_{12}^5J_{2,24}}{J_4J_6J_{1,12}J_{2,12}J_{3,12}^2J_{5,12}}\Big).
\end{align}

Substituting \eqref{eq-cubic-root-id-1} into \eqref{quad-id2-proof-T3T4} and simplifying by \eqref{eq-cubic-factor-1}, we have
\begin{equation}\label{quad-id2-T4}
 T_4(q)=
 -\frac{J_1}{3q}
 \frac{J_{1,12}^{2}J_{2,12}J_{5,12}^{3}}
      {J_{12}^{3}J_{3,12}^{2}J_6^{2}}.
\end{equation}
Comparing \eqref{quad-id2-proof-T3T4} with \eqref{quad-id2-proof-T5}, we see that $T_5(q)$ can be obtained from $T_4(q)$ simply by replacing $\omega$ with $\omega^2$. Therefore, we conclude from \eqref{quad-id2-T4} that
\begin{equation}\label{quad-id2-T5}
T_5(q)=T_4(q).
\end{equation}
Substituting \eqref{quad-id2-T1}--\eqref{quad-id2-T5} into \eqref{id2-F-split}, we obtain \eqref{KR7a} upon using the Maple approach in \cite{Frye-Garvan} to verify theta function identities.

\subsection{Proof of \eqref{KR8}}

We have
\begin{align}
F(q^2,q^3,q^5,q^8)=\oint \frac{(q^2z,\ii q^2z,-\ii q^2z,qz,1/z,q;q)_\infty}{(q^{\frac{3}{2}}z,-q^{\frac{3}{2}}z,-q^{\frac{5}{3}}z,-\omega q^{\frac{5}{3}}z,-\omega^2q^{\frac{5}{3}}z;q)_\infty}\frac{dz}{2\pi \ii z}.
\end{align}
Now setting
\begin{align}
A=4, B=1, C=5, D=0, a_1=q^2, a_2=\ii q^2, a_3=-\ii q^2, a_4=q, \\
b_1=1, c_1=q^{\frac{3}{2}}, c_2=-q^{\frac{3}{2}}, c_3=-q^{\frac{5}{3}}, c_4=-\omega q^{\frac{5}{3}}, c_5=-\omega^2 q^{\frac{5}{3}}
\end{align}
in Lemma \ref{lem-integral}, we deduce that
\begin{align}
F(q^2,q^3,q^5,q^8)=T_1(q)+T_2(q)+T_3(q)+T_4(q)+T_5(q),
\end{align}
where
\begin{align}
    T_1(q)&=K_1(1;q)R_1(1;q), \quad T_2(q)=K_1(-1;q)R_1(-1;q), \label{quad-id3-T1T2-defn} \\
    T_3(q)&=K_2(1;q)R_2(1;q), T_4(q)=K_2(\omega;q)R_2(\omega;q), T_5(q)=K_2(\omega^2;q)R_2(\omega^2;q) \label{quad-id3-T3T4T5-defn}
\end{align}
with 
\begin{align}
  K_1(\zeta;q)&=-\frac{1}{2}\zeta q^{-\frac{1}{2}}\frac{(\zeta q^{\frac{1}{2}};q)_\infty^3 (-q;q^2)_\infty}{(-q;q)_\infty (-\zeta q^{\frac{1}{2}};q^3)_\infty}, \\
  R_1(\zeta;q)&=\sum_{n=0}^\infty \frac{(\zeta q^{\frac{1}{2}};q)_n (-q;q^2)_n}{(q^2;q^2)_n (-\zeta q^{\frac{5}{2}};q^3)_n}(-\zeta)^n q^{\frac{1}{2}n^2+n}, \\
  K_2(\zeta;q)&=-\frac{1}{3}\zeta q^{-\frac{1}{3}} \frac{(-\zeta^2 q^{\frac{1}{3}};q)_\infty^2 (-\zeta q^{\frac{2}{3}};q^2)_\infty (-\zeta q^{\frac{2}{3}},q;q)_\infty}{(\zeta q^{\frac{5}{3}};q^2)_\infty (q^3;q^3)_\infty}, \\
  R_2(\zeta;q)&=\sum_{n=0}^\infty \frac{(-\zeta q^{\frac{2}{3}};q)_n (-\zeta^2 q^{\frac{4}{3}};q^2)_n}{(q^3;q^3)_n (\zeta^2 q^{\frac{1}{3}};q^2)_{n+1}} \zeta^n q^{\frac{1}{2}n^2+\frac{7}{6}n}.  
  \end{align}
Replacing $q$ by $q^2$ and setting $a=-q^{-1}$ in \eqref{eq-Chu}, we obtain 
\begin{align}
    \sum_{n=0}^\infty \frac{(q;q^2)_n (-q^2;q^4)_n}{(q^4;q^4)_n(-q^5;q^6)_n}(-1)^n q^{n^2+2n} =\frac{(-q^2;q^4)_\infty (q^3;q^6)_\infty (q^{12};q^{12})_\infty}{(q^4;q^4)_\infty (-q^5;q^6)_\infty (-q^2;q^{12})_\infty}. \label{eq-2358-a}
\end{align}
By \eqref{eq-2358-a} and \eqref{quad-id3-T1T2-defn}, we have
\begin{align}
T_1(q^2)=-\frac{1}{2}q^{-1}\frac{(-q^2;q^4)_\infty (q^6;q^6)_\infty}{(q^2;q^4)_\infty (q^8;q^8)_\infty (-q^2;q^{12})_\infty} (q;q^2)_\infty^4.
\end{align}
If we denote $S_1(q)=T_1(q^2)$. then we have $T_2(q^2)=S_1(-q)$. Hence
\begin{align}\label{T1plusT2}
&T_1(q)+T_2(q)=2S_1(q)|U_2 \nonumber \\
&=-\big(q^{-1}(q;q^2)_\infty^4\big)|U_2 \times \frac{(-q;q^2)_\infty (q^3;q^3)_\infty}{(q;q^2)_\infty (q^4;q^4)_\infty (-q;q^6)_\infty} \nonumber \\
&=4\frac{J_2J_3J_4^2}{J_1^4(-q;q^6)_\infty}.  \quad \text{(by \eqref{J2J1-U2})}
\end{align}

Replacing $q$ by $q^3$ and setting $a=-q^2$ in \eqref{id-cubic-C}, we obtain 
\begin{align}
\sum_{n=0}^\infty \frac{(-q^2;q^3)_n (-q^4;q^6)_n}{(q^9;q^9)_n(q;q^6)_{n+1}}q^{(3n^2+7n)/2}=\frac{(-q^5;q^6)_\infty (-q^{15};q^{18})_\infty}{(q;q^6)_\infty (q^9;q^{18})_\infty}. \label{eq-2358-b}
\end{align}

If we denote $S_2(q)=T_3(q^3)$, then by \eqref{eq-2358-b} and \eqref{quad-id3-T3T4T5-defn} we have
\begin{align}
S_2(q)&=-\frac{1}{3}q^{-1}\frac{(-q;q^3)_\infty^2(-q^2;q^6)_\infty(-q^2;q^3)_\infty (q^3;q^3)_\infty}{(q^5;q^6)_\infty (q^9;q^9)_\infty} \times \frac{(-q^5;q^6)_\infty (-q^{15};q^{18})_\infty}{(q;q^6)_\infty (q^9;q^{18})_\infty} \nonumber \\
&=-\frac{1}{3}q^{-1}(-q;q)_\infty^3 (-q^{15};q^{18})_\infty \frac{J_3^4J_{18}}{J_6^3J_9^2}.
\end{align}
It is easy to see that $T_4(q^3)=S_2(\omega^2 q)$ and $T_5(q^3)=S_2(\omega q)$. Hence
\begin{align}
&T_3(q)+T_4(q)+T_5(q)=3S_2(q)\mid U_3 \nonumber \\
&=-\big(q^{-1}(-q;q)_\infty^3\big)|U_3 \times (-q^5;q^6)_\infty \frac{J_6J_1^3}{J_2^3J_3^2}
&=-3(-q^5;q^6)_\infty \frac{J_2J_3^3}{J_1^4}.\quad \text{(by \eqref{J2J1-U3})} \label{2358-T345}
\end{align}
Adding \eqref{T1plusT2} and \eqref{2358-T345} together, we see that \eqref{KR8} is equivalent to
\begin{align}
4\frac{J_2J_3J_4^2}{J_1^4(-q;q^6)_\infty}-3(-q^5;q^6)_\infty \frac{J_2J_3^3}{J_1^4}=\frac{1}{(q^2,q^3,q^4,q^5,q^8,q^9,q^{11};q^{12})_\infty}.
\end{align}
Multiplying both sides by $(q^2,q^3,q^5;q^6)_\infty (q^4;q^{12})_\infty$, we see that it is equivalent to
\begin{align}
4\frac{J_3J_4^3}{J_1^3J_{12}}-3\frac{J_2^2J_3^4}{J_1^4J_6^2}=1. \label{J-eta-id}
\end{align}
This appears in \cite[(22.7.4)]{Hirschhorn}. This proves \eqref{KR8}.

\subsection{Proof of \eqref{KR8a}}
We denote the left side of \eqref{KR8a} as $L(q)$ and let
\begin{equation}\label{eq-Fr}
 F_r:=F(q^{r+2},q^{2r+3},q^{3r+4},q^{4r+8}).
\end{equation}
Then
\begin{align}\label{mix-L-split}
L(q)=F_0+qF_1+q^3F_2-q^7F_3.
\end{align}
We have
\begin{align}
    F_r&=\sum_{i,j,k,\ell \geq 0}\frac{(-1)^{\ell}q^{\binom{i+2j+3k+4\ell}{2}+2\ell^2+2i+3j+4k+6\ell+r(i+2j+3k+4\ell)}}{(q;q)_{i}(q^2;q^2)_{j}(q^3;q^3)_{k}(q^4;q^4)_{\ell}} \nonumber \\
    &=q^{-\binom{r}{2}}\sum_{i,j,k,\ell \geq 0}\frac{(-1)^{\ell}q^{\binom{i+2j+3k+4\ell+r}{2}+2\ell^2+2i+3j+4k+6\ell}}{(q;q)_{i}(q^2;q^2)_{j}(q^3;q^3)_{k}(q^4;q^4)_{\ell}}.
\end{align}
We define
\begin{align}\label{eq-Bp}
I(z)&:=\sum_{i,j,k,\ell\geq 0}
\frac{(-1)^{i+k+\ell}
 q^{2i+3j+4k+2\ell^2+6\ell}
 z^{i+2j+3k+4\ell}}
{(q;q)_i(q^2;q^2)_j(q^3;q^3)_k(q^4;q^4)_\ell} \nonumber \\
 &=\frac{(q^8z^4;q^4)_\infty}
{(-q^2z;q)_\infty(q^3z^2;q^2)_\infty
 (-q^4z^3;q^3)_\infty}.
\end{align}

It follows that for $r=0,1,2,3$,
\begin{equation}\label{eq-FrCT}
 F_r=(-1)^rq^{-\binom{r}{2}}\mathrm{CT}_z\left[z^rI(z)(qz,1/z,q;q)_\infty\right].
\end{equation}

Let $p(z)=1-qz+q^2z^2+q^4z^3$.  
From \eqref{mix-L-split} and \eqref{eq-FrCT} we have
\begin{align}\label{mix-L-integral}
L(q)&=\mathrm{CT}_z\left[p(z)I(z)(qz,1/z,q;q)_\infty \right] \nonumber \\
&=\oint \frac{p(z)(qz,q^2z,\ii q^2 z,-\ii q^2z,1/z,q;q)_\infty}{(q^{3/2}z,-q^{3/2}z,-q^{4/3}z,-\omega q^{4/3}z,-\omega^2 q^{4/3}z;q)_\infty} \frac{dz}{2\pi \ii z}.
\end{align}
Now set
\begin{align}
&A=4, B=1, C=5,D=0, a_1=q^2,  a_2=\ii q^2, 
 a_3=-\ii q^2,  a_4=q, \nonumber \\
 &b_1=1, c_1=q^{3/2}, c_2=-q^{3/2},
 c_3=-q^{4/3},  c_4=-\omega q^{4/3}, 
 c_5=-\omega^2q^{4/3}
\label{eq-parameters}
\end{align}
in Lemma~\ref{lem-integral}, we deduce that
\begin{align}\label{quad-id4-L-split}
    L(q)=T_1(q)+T_2(q)+T_3(q)+T_4(q)+T_5(q)
\end{align}
where 
\begin{align}
    T_1(q)&=K_1(1;q)R_1(1;q),
 \quad T_2(q)=K_{1}(-1;q)R_1(-1;q),  \label{quad-id4-proof-T1T2}\\
T_3(q)&=K_2(1;q)R_2(1;q),
T_4(q)=K_2(\omega;q)R_2(\omega;q),
T_5(q)=K_2(\omega^2;q)R_2(\omega^2;q), \label{quad-id4-proof-T3T4T5}
\end{align}
with
\begin{align} 
K_1(\zeta;q)&:=\frac{(\zeta q^{3/2},\zeta q^{-1/2}, \zeta q^{1/2};q)_\infty(-q;q^2)_\infty}
{(-1;q)_\infty(-\zeta q^{-1/2};q^3)_\infty}, \label{id4-K1}\\
R_1(\zeta;q)&:=\sum_{n=0}^\infty \frac{(\zeta q^{1/2};q)_n(-q;q^2)_n}
{(q^2;q^2)_n(-\zeta q^{7/2};q^3)_n}(-\zeta)^nq^{(n^2+4n)/2} \nonumber \\
&\quad \times \left(1-\zeta q^{-n-\frac{1}{2}}+q^{-2n-1} +\zeta q^{-3n-\frac{1}{2}}\right),
\label{id4-R1} \\
K_2(\zeta;q)&:=\frac{(-\zeta q^{4/3},-\zeta^2q^{-1/3},-\zeta^2q^{2/3};q)_\infty
      (-\zeta q^{4/3};q^2)_\infty}
{(\omega,\omega^2;q)_\infty (\zeta q^{1/3};q^2)_\infty},  \label{id4-K2}\\
R_2(\zeta;q)&:=\sum_{n=0}^\infty \frac{(-\zeta q^{1/3};q)_n (-\zeta^2q^{2/3};q^2)_n}
{(q^3;q^3)_n(\zeta^2q^{5/3};q^2)_n}\zeta^nq^{(3n^2+11n)/6}
\notag\\
&\quad\times \left(1+\zeta^2q^{-n-\frac{1}{3}} +\zeta q^{-2n-\frac{2}{3}}-q^{-3n}\right).
\label{eq-id4-R2}
\end{align}
If we denote $S_1(q)=T_1(q^2)$ and $S_2(q)=T_3(q^3)$, then from \eqref{quad-id4-proof-T1T2}--\eqref{quad-id4-proof-T3T4T5} we have
\begin{align}
 T_2(q^2)=S_1(-q), \quad   T_4(q)=S_2(\omega q),\quad T_5(q)=S_2(\omega^2 q).
\end{align}
We now evaluate $S_1(q)$ and $S_2(q)$. Note that
\begin{align}
1-q^{-n-\frac{1}{2}}+q^{-2n-1}+q^{-3n-\frac{1}{2}}=q^{-3n-\frac{1}{2}}(1+q^{3n+\frac{1}{2}})+q^{-2n-1}(1-q^{n+\frac{1}{2}}).
\end{align}
We have
\begin{align}\label{eq-R1-new}
R_1(1;q)&=\sum_{n=0}^\infty \frac{(q^{1/2};q)_n(-q;q^2)_n(-1)^nq^{\frac{1}{2}(n^2-1)-n}}{(q^2;q^2)_n(-q^{7/2};q^3)_{n-1}} \nonumber \\
&\quad \quad +\sum_{n=0}^\infty \frac{(q^{\frac{1}{2}};q)_n(-q;q^2)_n(-1)^nq^{\frac{1}{2}n^2-1}}{(q^2;q^2)_n(-q^{7/2};q^3)_n}(1-q^{n+\frac{1}{2}}) \nonumber \\
&=-\frac{1}{1+q^{1/2}}\sum_{n=-1}^\infty \frac{(-1)^nq^{n^2/2+2n}(q^{3/2};q)_n(-q;q^2)_n}
{(q^4;q^2)_n(-q^{7/2};q^3)_n}.
\end{align}
Here we replaced $n$ by $n+1$ in the first sum and then combine the two sums together.   Now we replace $q$ by $q^2$.  We write $n=m-1$ with $m\geq 0$.  For any $m\geq 0$ we have
\begin{align*}
(q^3;q^2)_{m-1}&=\frac{(q;q^2)_m}{1-q},
\quad
(-q^2;q^4)_{m-1}=\frac{(-q^2;q^4)_m}{1+q^{4m-2}},\\
(q^8;q^4)_{m-1}&=\frac{(q^4;q^4)_m}{1-q^4},
\quad
(-q^7;q^6)_{m-1}=\frac{(-q;q^6)_m}{1+q}.
\end{align*}
Hence
\begin{align}
R_1(1;q^2)&=q^{-3}(1+q)(1+q^2)\sum_{m\geq 0}\frac{(q;q^2)_m(-q^2;q^4)_m}{(q^4;q^4)_m(-q;q^6)_m(1+q^{4m-2})}(-1)^m q^{m^2+2m} \nonumber \\
&=q^{-1}(1+q)\sum_{m\geq 0}\frac{(q;q^2)_m(-q^{-2};q^4)_m}{(q^4;q^4)_m(-q;q^6)_m}
(-1)^m q^{m^2+2m} \nonumber \\
&=q^{-1}(1+q)\frac{(-q^2;q^4)_\infty(q^3;q^6)_\infty(q^{12};q^{12})_\infty}{(q^4;q^4)_\infty(-q;q^6)_\infty (-q^{10};q^{12})_\infty}.  \label{quad-id4-R1-product}
\end{align}
Here the last equality follows from \eqref{eq-Chu} with $q$ replaced by $q^2$ and $a=-q^{-5}$. 

From  \eqref{quad-id4-R1-product} we have
\begin{align}
    S_1(q)=T_1(q^2)=-\frac{1}{2}q^{-1}\frac{(q;q^2)_\infty^4 (q^4;q^{24})_\infty (q^{12};q^{24})_\infty^2}{(q^2;q^{12})_\infty^2 (q^6,q^{10};q^{12})_\infty (q^8,q^{16};q^{24})_\infty}.
\end{align}
It follows that
\begin{align}\label{quad-id4-T1T2-sum}
   & T_1(q)+T_2(q)=S_1(q)+S_1(-q)=2S_1(q)|U_2 \nonumber \\
   &=4\frac{J_4^{4}}{J_1^2J_2^2}\frac{(q^2;q^{12})_\infty (q^6;q^{12})_\infty^2}{(q;q^6)_\infty^2 (q^3,q^5;q^6)_\infty (q^4,q^8;q^{12})_\infty}
\end{align}
Here for the second equality we used \eqref{J2J1-U2}.

Note that
\begin{align}
1+q^{-n-\frac{1}{3}}+q^{-2n-\frac{2}{3}}-q^{-3n}=-q^{-3n}(1-q^{3n})+q^{-2n-\frac{2}{3}}(1+q^{n+\frac{1}{3}}).
\end{align}
We have
\begin{align}
    R_2(1;q)&=-\sum_{n=0}^\infty \frac{(-q^{1/3};q)_n(-q^{2/3};q^2)_nq^{(3n^2-7n)/6}}{(q^3;q^3)_{n-1}(q^{5/3};q^2)_n} \nonumber \\
&\quad +\sum_{n=0}^\infty \frac{(-q^{1/3};q)_{n+1}(-q^{2/3};q^2)_nq^{(3n^2-n-4)/6}}{(q^3;q^3)_n(q^{5/3};q^2)_n} \nonumber \\
&=-\frac{(1+q)(1+q^{1/3})}{1-q^{5/3}}\sum_{n\geq 0}\frac{(-q^{4/3};q)_n(-q^{2/3};q^2)_n}{(q^3;q^3)_n(q^{11/3};q^2)_n}q^{(3n^2+11n)/6}.
\label{eq-def-R}
\end{align}
Here for the last equality we replaced $n$ by $n+1$ in the first sum and then combine the two sums together. 
Setting $a=-q^{4/3}$ in \eqref{id-cubic-C}, we obtain 
\begin{align}
\sum_{n\geq 0}\frac{(-q^{4/3};q)_n(-q^{2/3};q^2)_n}{(q^3;q^3)_n(q^{11/3};q^2)_n}
q^{(3n^2+11n)/6}=\frac{(-q^{7/3};q^2)_\infty(-q^7;q^6)_\infty}{(q^{11/3};q^2)_\infty(q^3;q^6)_\infty}.
\label{eq-R-mid}
\end{align}
Substituting \eqref{eq-R-mid} into \eqref{eq-def-R}, we deduce that
\begin{equation}\label{quad-id4-R13-product}
R_2(1;q)=\frac{(-q^{1/3};q^2)_\infty (-q;q^6)_\infty}{(q^{5/3};q^2)_\infty (q^3;q^6)_\infty}.
\end{equation}

Hence we have
\begin{align}
    S_2(q)=T_3(q^3)=-\frac{1}{3}q^{-1}\frac{(q^2;q^2)_\infty^3}{(q;q)_\infty^3} \frac{(q^3;q^6)_\infty (q^6;q^9)_\infty (q^{12};q^{18})_\infty (q^6;q^{36})_\infty}{(-q^3;q^3)_\infty^2 (q^9;q^{18})_\infty}.
\end{align}
Hence we have
\begin{align}\label{quad-id4-T3T4T5-sum}
&T_3(q)+T_4(q)+T_5(q)=S_2(q)+S_2(\omega q)+S_2(\omega^2 q)=3S_2(q)|U_3  \nonumber \\
&=-3\frac{J_2^4J_3^5}{J_1^8J_6}\frac{(q;q^2)_\infty (q^2;q^3)_\infty (q^4,q^6)_\infty (q^2;q^{12})_\infty}{(-q;q)_\infty^2 (q^3;q^6)_\infty}
\end{align}
Here for the last equality we used \eqref{J2J1-U3}. 

Substituting \eqref{quad-id4-T1T2-sum} and \eqref{quad-id4-T3T4T5-sum} into \eqref{quad-id4-L-split} we obtain a product representation for $L(q)$. After rearranging the terms, we see that \eqref{KR8a} is equivalent to the following theta function identity
\begin{align}
    4\frac{J_4^4J_{2,12}J_6^2}{J_1^3J_2J_{4,12}J_{12}^2}-3\frac{J_2J_3^3J_{2,12}}{J_1^4J_{12}} =\frac{J_{12}^2}{J_{3,12}J_{4,12}}.
\end{align}
This can be proved easily using the Maple approach in \cite{Frye-Garvan}. This proves \eqref{KR8a}.

\subsection*{Acknowledgements}
This work was supported by the National Key R\&D Program of China (Grant No.\ 2024YFA1014500). The author is grateful to Prof.~ Michael Schlosser for helpful correspondence in 2022 on some cubic identities. 

\subsection*{Declaration of AI use}
ChatGPT (OpenAI, GPT-5.6 Sol) was used to assist with calculations and to develop several key steps to complete the proofs in Section \ref{sec-cubic}. It was also used to improve the language and presentation of the manuscript. All AI-assisted calculations, arguments, and suggestions were independently verified and revised by the author, who takes full responsibility for the content of the article.

\end{document}